\newtheorem{prop}{Proposition}[section]
\newtheorem{rem}[prop]{Remark}
\newtheorem{lem}[prop]{Lemma}
\newtheorem{theo}[prop]{Theorem}
\numberwithin{equation}{section}
\newcommand{\beq}{\begin{eqnarray}}
\newcommand{\beqq}{\begin{eqnarray*}}
\newcommand{\eeq}{\end{eqnarray}}
\newcommand{\eeqq}{\end{eqnarray*}}
\newcommand{\R}{\mathbb{R}}
\newcommand{\E}{\mathbb{E}}
\renewcommand{\P}{\mathbb{P}}
\newcommand{\B}{\mathbf{B}}
\newcommand{\X}{\mathbb{X}}
\newcommand{\T}{\mathbb{T}}
\newcommand{\F}{\mathbb{F}}
\newcommand{\C}{\mathbb{C}}
\title{Extinction time of non-Markovian self-similar processes,  persistence,  annihilation of jumps   and the Fr\'echet distribution}
\author{ {\sc R.~Loeffen}\thanks{School of Mathematics,
University of Manchester, Manchester M13 9PL, UK
\  E-mail: ronnie.loeffen@manchester.ac.uk}  \:
  {\sc P.~Patie}\thanks{Cornell University, School of Operations Research and Information Engineering,
   220 Rhodes Hall, Ithaca, NY 14853, U.S.A. \  E-mail: pp396@cornell.edu } \: {\sc and}  {\sc M.~Savov}\thanks{Institute of Mathematics and Informatics, Bulgarian Academy of Sciences,  "Akad. Georgi Bonchev" bl. 8, Sofia 1113, Bulgaria \  E-mail: mladensavov@math.bas.bg}}
\date{}
\begin{document}
%%%%%%%%%%%%%%%%%%%%%%%%%%%%%%%%%%%%%%%%%%%%%%%%%%%%%%%%%%%%%%%%

%%%%%%%%%%%%%%%%%%%%%%%%%%%%%%%%%%%%%%%%%%%%%%%%%%%%%%%%%%%%%%%%
\maketitle
\begin{abstract}
We start by providing an explicit characterization and analytical properties, including the persistence phenomena, of   the distribution  of the extinction time  $\T$ of a  class of non-Markovian self-similar  stochastic processes with two-sided jumps  that we introduce as a  stochastic time-change of Markovian self-similar processes. %They are shown to be associated with  the set of Wiener-Hopf factors of L\'evy processes and of Bernstein functions.
For a suitably chosen time-changed, % for which the random time-changed is related to the ascending ladder height exponent say $\phi_+$ and ahve
we observe,  for  classes with two-sided jumps,  the following surprising facts. On the one hand,  all the $\T$'s within a class  have the same law which we  identify in a simple form for all classes and  reduces, in the   spectrally positive case, to the Fr\'echet distribution. On the other hand, each of its distribution corresponds to  the law of an extinction time of a single Markov process without positive jumps, leaving the interpretation that the time-changed has annihilated the effect of positive jumps.  The example of the non-Markovian processes associated to  L\'evy stable processes is detailed.
%We start by introducing a build up from the Lamperti Markovian self-similar process time changed by the inverse of an independent increasing Lamperti Markovian self-similar of index $\beta$ process.

\end{abstract}
%%%%%%%%%%%%%%%%%%%%%%%%%%%%%%%%%%%%%%%%%%%%%%%%%%%%%%%%%%%%%%%%

%%%%%%%%%%%%%%%%%%%%%%%%%%%%%%%%%%%%%%%%%%%%%%%%%%%%%%%%%%%%%%%%
$\vspace{5pt}$
\\
\textbf{AMS 2010 subject classifications:} Primary: 60G18. Secondary: 42A38, 33E50.
%%%%%%%%%%%%%%%%%%%%%%%%%%%%%%%%%%%%%%%%%%%%%%%%%%%%%%%%%%%%%%%%

%%%%%%%%%%%%%%%%%%%%%%%%%%%%%%%%%%%%%%%%%%%%%%%%%%%%%%%%%%%%%%%%
$\vspace{5pt}$
\\
\textbf{Key words:} Self-similar processes,  Mellin transform, first passage times, Bernstein functions, Fr\'echet distribution

%%%%%%%%%%%%%%%%%%%%%%%%%%%%%%%%%%%%%%%%%%%%%%%%%%%%%%%%%%%%%%%%
\section{Introduction and main results}
%%%%%%%%%%%%%%%%%%%%%%%%%%%%%%%%%%%%%%%%%%%%%%%%%%%%%%%%%%%%%%%%
The aim of this paper is to characterize explicitly and derive  analytical properties of  the distribution of the positive random variable
\beq\label{defT}
\T = \inf \{t>0;\: \X_t \leq 0   \}
\eeq
where $\X=(\X_t)_{t\geq 0}$ is the stochastic process defined on a  probability space $(\Omega,\mathcal F,\P)$   by, for $t\geq0$,
\beq\label{LK} \label{eq:def_inv}
\mathbb{X}_t = X_{\lambda_t} \quad \textrm{ where } \lambda_t=\inf \{s>0;\: \chi_s>t \}
\eeq
and $X=(X_t)_{t\geq 0}$
(resp.~$\chi=(\chi_t)_{t\geq 0}$) is a  self-similar of index $\alpha>0$ (resp.~of index $\beta>0$ and a.s.~increasing  with infinite lifetime) Markov process issued from $x>0$ (resp.~issued from $0$).

Our investigation includes the first exit time to the positive half-line $\T$  of the Brownian motion, the Bessel processes and more generally of any non-degenerate stable L\'evy processes,  time-changed by the inverse of a $\beta$-stable subordinator with $0<\beta<1$. The recent years have witnessed the ubiquity of such non-Markovian dynamics in relation to the fractional Cauchy problem, see e.g.~\cite{Toaldo,Orsin,Hairer}, and, also due to  their central role in diverse physical applications within the field of anomalous diffusion, see e.g.~\cite{Meer},  and also for neuronal models for which their long range dependence feature is attractive, see e.g.~\cite{Neuro}. There is a substantial literature devoted to the study and applications of the first passage times of non-Markovian dynamics such as  Gaussian processes and semi-Markov (Markov processes time-changed with the inverse of a subordinator), see  e.g.~\cite{Deng,Dembo, Koren,Guo, Nualart, Toaldo-FPT} including diverse applications in physics.
However, unlike for Markov processes, this literature reveals that the  lack of a general theory makes the analysis of such objects difficult  and  only  very partial statistical information regarding these random variables have been obtained. For instance, for some Gaussian processes and for regularly varying semi-Markov processes,  the persistence probabilities decay rate has been observed, meaning  that the survival probabilities of the first passage time distribution has a power decay which is independent of the state variable, see
the above references \cite{Dembo, Toaldo-FPT} and the references therein. We shall also identify, among different fine properties, the persistence phenomena for the distribution of $\T$, see Theorem \ref{thm1} \ref{it:mth1}) below.
%where the notation will explained below.

Denoting the law of the process by $\P_x$ when starting from $x>0$, we recall  that  the stochastic process $X$ is said to be self-similar of index $\alpha>0$ (or $\alpha$-self-similar) if the following identity
\begin{equation}\label{eq:defselfsim}
  (X_{c^{\alpha}t},\P_{cx})_{t\geq 0} \stackrel{(d)}{=} (c X_t,\P_{x})_{t\geq 0}
\end{equation}
holds in the sense of finite-dimensional distributions for any $c>0$.  Thus, since $\chi$ is a  $\beta$-self-similar process with a.s.~increasing paths, $\lambda$ has clearly a.s.~continuous and non-decreasing paths and is $\frac{1}{\beta}$-self-similar and non-Markovian. Since $X$ and $\lambda$ are independent, one easily gets that $\X$ is $\frac{\alpha}{\beta}$-self-similar and non-Markovian.

Note that  every jump
of the increasing self-similar Markov process $\chi$ corresponds to a plateau for its continuous inverse $\lambda$. In the physical literature, these periods are interpreted as trapping events in the dynamics of the particle $\X$  and thus slow down  the dynamics of the original particle $X$. For this reason, in the framework of diffusion,  the time-changed process $\X$ is often called a subdiffusion or an anomalous diffusion, see \cite{Meer}.

Next,  we recall that Lamperti \cite{Lamperti} identifies a one-to-one mapping between the class of positive self-similar Markov processes and the class of L\'evy processes. More specifically, one has, under $\P_x, x>0$, that
\begin{equation}\label{eq:def_lamp}
  X_t = x\exp\left(Y_{A_{x^{-\alpha}t}}\right),\quad 0\leq t<\textrm{T}=\inf\{s>0; \: X_s=0\},
\end{equation}
where $A_t = \inf\{s>0;\: \int_{0}^{s}\exp(\alpha Y_u)du>t\}$.
Here $Y=(Y_t)_{t\geq 0}$ as a L\'evy process is a stochastic process with stationary and independent increments with c\`adl\`ag sample paths. Moreover, its law is fully characterized by its characteristic exponent $\Psi(z)  =\log \E[e^{z Y_1}],z \in i\R$, that takes the form
 \beq\label{LK}
\Psi(z)= \Psi(0)+\sigma^2 z^2 + \textrm{a}z + \int_{\R}(e^{z y} -1-yz\mathbb{I}_{\{|y|<1\}})\Pi(dy),
\eeq
in which $\sigma^2,-\Psi(0)\geq 0$ reflect the diffusion coefficient and the killing rate respectively, $\textrm{a} \in\mathbb{R}$, is the coefficient of the linear part and $\Pi$ is the L\'{e}vy measure that characterizes the jumps and satisfies the condition $\int_{\R}(1\wedge |y|^{2})\Pi(dy)<+\infty$ and $\Pi(\{0\})=0$.  We shall also need the analytical Wiener-Hopf factorization of the L\'evy-Khintchine exponent $\Psi_{\alpha}(z)=\Psi(\alpha z)$ of $\alpha Y,$
which is given, for any $z\in i\R$, by
\begin{equation}\label{eq:Wh}
  \Psi_{\alpha}(z)= -\phi_{\alpha}^-(z)\phi_{\alpha}^+(-z)
\end{equation}
where $\phi_{\alpha}^\pm \in \B$, the set of  Bernstein functions, that is
\[ \phi_{\alpha}^\pm(0)\geq 0 \textrm{  and  } \phi_{\alpha}^\pm(u)-\phi_{\alpha}^\pm(0) \textrm{ are of the form  \eqref{LKs} below.}\]
 In order to avoid the trivial situation when $\T=\infty$ $\P_x$-almost surely (a.s.), according to Lamperti, see also \cite[Section 2.2]{Patie-Savov-BG}, it suffices that
 \begin{equation}\label{eq:id_T_exp}
 \textrm{T}  \stackrel{(d)}{=} x^{\alpha} \int_{0}^{\infty}\exp(\alpha Y_t)dt<\infty,
 \end{equation}
 which in turn is equivalent to the assumption
%\begin{equation*}
$\phi_{\alpha}^+(0)>0$ in \eqref{eq:Wh}. For this reason, we consider the set \[ \mathcal{N}=\{\Psi \textrm{ of the form } \eqref{LK}; \: \Psi_{\alpha}(z)= -\phi_{\alpha}^-(z)\phi_{\alpha}^+(-z) \textrm{ with } \phi_{\alpha}^+(0)>0 \}.\]
%\end{equation*}
%Note that this condition ensures that  a.s.~$\lim_{t \to \infty}Y_t=\infty$.
Next, we denote by $\varrho$ the subordinator associated to $\chi$ by the Lamperti mapping \eqref{eq:def_lamp} (replacing $\alpha$ by $\beta$) and its law is characterized by the Bernstein function $\phi(z)= -\log \E[e^{-z \varrho_1}], \Re(z)\geq 0$, which is expressed as
\beq\label{LKs}
\phi(z)=\textrm{d} z   + \int_{0}^{\infty} (1 - e^{-zy}) \vartheta(dy),
\eeq
where $\textrm{d} \geq 0$ and $\vartheta$ is a L\'evy measure such that $\int_{0}^{\infty} (1 \wedge y) \vartheta(dy)<+\infty$. Next, to ensure that the process $\chi$ can start from $0$ which is then viewed as an entrance boundary, one needs in addition that $\int_{0}^{\infty} y\vartheta(dy)<+\infty$ which implies that
\begin{equation}\label{eq:mean}
  \E[\varrho_1] = \phi'(0^+)=\textrm{d}+\int_{0}^{\infty} y \vartheta(dy)<+\infty,
\end{equation}
see e.g.~\cite{Caballero-Chaumont-06-b}. Moreover,  it is easily seen that the Lamperti mapping  yields that $\chi$ has a.s.~increasing paths if and only if the ones of $\varrho $  are also a.s.~increasing. It is well known that the latter holds if  $\phi(\infty)=\infty$ or equivalently either one of the following conditions
 \begin{equation}\label{eq:ass}
  \textrm{d}>0 \textrm{ or/and }  \vartheta(0,1)=\infty,
\end{equation}
holds.
Then, we write
\[ \B_\varrho =\{ \phi \in \B;\: \phi(0)=0,  \eqref{eq:mean} \textrm{ and } \eqref{eq:ass}  \textrm{ hold}\} \]
and we refer to the monograph \cite{Bertoin-96} for a thorough account on L\'evy processes.
%Note the non-exclusive conditions \eqref{eq:ass} ensure that $\varrho$ has a.s.~increasing paths. It is also easily seen, from the Lamperti mapping from $\chi$, that these conditions also  imply that $\chi$ has a.s.~increasing paths as well.
Next, for any $\phi \in \B$, we write
\[ \mathfrak{a}_{\phi} = \sup \{u\geq 0; \: |\phi(-u)|<\infty \} \in [0,\infty] \textrm{ and }  \mathfrak{a}^*_{\phi} = \sup \{u\geq 0; \: 0\leq \phi(-u)<\infty \} \in [0,\infty] \]
and note that    $\mathfrak{a}_{\phi}\geq \mathfrak{a}^*_{\phi}$.
We shall also need, for any $\phi \in \B$,  the function $W_\phi$ which is the unique positive-definite function, i.e.~the Mellin transform of a positive measure, that solves the functional equation, for $\Re(z)>-\mathfrak{a}^*_{\phi}$,
\begin{equation}
\label{eq:functional-equation-for-W_phi}
W_\phi(z+1) = \phi(z)W_\phi(z), \quad W_\phi(1) = 1.
\end{equation}
It is easily checked that for any integer $n$, $W_\phi(n+1)=\prod_{k=1}^{n}\phi(k)$. These functions are thoroughly investigated in \cite[Section 4]{Patie-Savov-BG}.

\noindent To summarize, the process $\X$ is  self-similar of index $\frac{\alpha}{\beta}$ starting from $x>0$ and it is non-Markovian with possible upward and downward jumps depending on the support of the L\'evy measure $\Pi$ in \eqref{LK}. Indeed,  the continuity of the paths of $\lambda$ entails that the processes $X$ and $\X$ have  jumps of the same amplitude and direction. Moreover,  from the Lamperti mapping it can be identified uniquely  by the two L\'evy-Khintchine exponents $\Psi_{\alpha}
\in \mathcal{N}$ and $\phi_{\beta} \in \B_{\varrho}$. To emphasize this connection we shall also use the notation
 \beq\label{defT}
\T_{\Psi_{\alpha}}(\phi_{\beta}) = \T = \inf \{t>0;\: \X_t \leq 0   \}
\eeq
and  in the same spirit we may write ${\rm{T}}_{\Psi_\alpha}={\rm{T}}$. As usual, we denote by
$\mathtt{C}^{\infty}_0\!\left(\mathbb{R}^+ \right)$ (resp.~$\mathtt{C}^{k}_0\!\left(\mathbb{R}^+ \right)$)  the space of infinitely (resp.~$k\in \mathbb Z^+$ times) continuously differentiable functions on $\R^+$ vanishing at $\infty$ along with its derivatives. We are now ready to state our first main result.
\begin{theo}\label{thm1}
Let $\Psi \in \mathcal{N}$ and $\phi \in \B_{\varrho}$ and $\alpha,\beta>0$. Then, the following holds.
\begin{enumerate}[1)]
\item For any $x>0$,
 \beq\label{idT_1}
\E_x\left[{\T^{z}_{\Psi_{\alpha}}(\phi_{\beta})}\right]= x^{\frac{\alpha}{\beta}  z}\frac{ \phi_{\alpha}^+(0)}{\beta\phi'(0^+)} \frac{\Gamma(-\frac{z}{\beta})}{W_{\phi_{\beta}}(-\frac{z}{\beta})}\frac{\Gamma(\frac{z}{\beta}+1)W_{\phi_{\alpha}^+}(-\frac{z}{\beta})}{W_{\phi_{\alpha}^{-}}(\frac{z}{\beta}+1)}, \: -\underline{\mathfrak{m}}_{\T}<\Re(z)< \overline{\mathfrak{m}}_{\T},
\eeq
where  $\underline{\mathfrak{m}}_{\T}= \beta(\mathfrak{a}_{\phi^-_{\alpha}} \mathbb{I}_{\{\phi^-_{\alpha}(0)=0\}}+1)\geq \beta$ and $\overline{\mathfrak{m}}_{\T} = \beta (\mathfrak{a}_{\phi_{\beta}} \wedge \mathfrak{a}^*_{\phi^+_{\alpha}})$.
\item The law of $\T_{\Psi_{\alpha}}(\phi_{\beta})$ is absolutely continuous with a density denoted by $f_{\T_{\Psi_{\alpha}}(\phi_{\beta})}$ which has the following smoothness property
\[ f_{\T_{\Psi_{\alpha}}(\phi_{\beta})}\in \mathtt{C}^{\lceil {\rm{N}}\rceil-2}_0\left(\mathbb{R}^+ \right), \]
provided ${\rm{N}}> 1$,  where  $ {\rm{N}}={\rm{N}_{\phi_\beta}}+{\rm{N}_{\Psi_\alpha}} \in [0,\infty],$ \[ {\rm{N}_{\phi_\beta}}=\frac{\vartheta_{\beta}(0,\infty)}{d_{\beta}},\: {\rm{N}_{\Psi_\alpha}}=\frac{\phi^-_{\alpha}(0)+
\vartheta^-_{\alpha}(0,\infty)}{d^-_\alpha}+\frac{v^+_{\alpha}(0^+)}{\phi^+_{\alpha}(0)+\vartheta^+_{\alpha}(0,\infty)}+
\infty\mathbb{I}_{\{d^+_{\alpha}>0\}}, \] and,  $v^+_{\alpha}$ is the density of $\vartheta^+_{\alpha}$, whose existence is justified  in the proof.
%Note that ${\rm{N}}=\infty$ if at least one of the following conditions is satisfied
%\[ d^+_{\alpha}>0, d_{\beta}=0, d^-_\alpha=0, \vartheta_{\beta}(0,\infty)=\infty \textrm{ or }  \vartheta^-_{\alpha}(0,\infty)=\infty.  %\]
\item \label{it:mth1}Let us write simply $\mathfrak{c}_{\alpha}=\mathfrak{a}^*_{\phi_{\alpha}^+}$ and assume that $0 <\mathfrak{c}_{\alpha}<\mathfrak{a}_{\phi_{\beta}}$ with $\Psi_{\alpha}(-\mathfrak{c}_{\alpha})=\phi_{\alpha}^+(-\mathfrak{c}_{\alpha})=0$, $|\Psi_{\alpha}'(-\mathfrak{c}_{\alpha}^+)|<\infty$ and  $\{ b \in \R;  \Psi_{\alpha}(-\mathfrak{c}_{\alpha}+ib)=0\}=\{0\}$ then
\[ \lim_{t\rightarrow \infty}t^{\beta\mathfrak{c}_{\alpha}}\P_x(\T_{\Psi_{\alpha}}(\phi_{\beta})>t)=\frac{\E_x\left[{\T}^{\beta \mathfrak{c}_{\alpha}}_{\Psi_{\alpha}}(\phi_{\beta})\right]}{\mathfrak{c}_{\alpha}\phi_{\alpha}^{+\prime}(-\mathfrak{c}_{\alpha}^+)}\in(0,\infty). \]
Finally, if in addition $|\Psi_{\alpha}''(-\mathfrak{c}_{\alpha}^+)|<\infty$,  $2\leq \lceil {\rm{N}_{\Psi_\alpha}}\rceil<\infty $ (resp.~$\lceil {\rm{N}_{\Psi_\alpha}}\rceil=\infty$ and there exists $k\in\mathbb{Z}^+$ such that $\liminf_{|b|\to\infty}|b|^k\left|\Psi_{\alpha}(-\mathfrak{c}_{\alpha}+ib)\right|>0 $) then for any $n\leq \lceil {\rm{N}_{\Psi_\alpha}}\rceil-2 + (\lceil {\rm{N}_{\phi_\beta}}\rceil -2)\mathbb{I}_{\{\lceil {\rm{N}_{\phi_\beta}}\rceil\geq 2\}}$ (resp.~for any $n\in \mathbb Z^+$)
\[ \lim_{t\rightarrow \infty}t^{\mathfrak{c}_{\alpha} +n+1}(t^{\frac{1}{\beta}-1}f_{\T_{\Psi_{\alpha}}(\phi_{\beta})}(t^{\frac{1}{\beta}}))^{(n)}=\beta(-1)^nC_{\mathfrak{c}_{\alpha}}(n) \frac{\E_x\left[{\T}^{ \mathfrak{c}_{\alpha}}_{\Psi_{\alpha}}(\phi_{\beta})\right]}{\mathfrak{c}_{\alpha}\phi_{\alpha}^{+\prime}(-\mathfrak{c}_{\alpha}^+)} \]
where   $C_{\mathfrak{c}_{\alpha}}(n)=(1+\mathfrak{c}_{\alpha})_{n-\overline{{\rm{N}}}_{\Psi_{\alpha}}} \sum_{k=0}^{\overline{{\rm{N}}}_{\Psi_{\alpha}}} {\overline{{\rm{N}}}_{\Psi_{\alpha}} \choose k}
\frac{\Gamma(\overline{{\rm{N}}}_{\Psi_{\alpha}}-n+1)}{\Gamma(k-n+1)} (-1)^{k}(1+\mathfrak{c}_{\alpha})_{k}$, $\overline{{\rm{N}}}_{\Psi_{\alpha}}=\lceil {\rm{N}_{\Psi_\alpha}}\rceil -2$ and $(1+\mathfrak{c}_{\alpha})_k=\frac{\Gamma(1+\mathfrak{c}_{\alpha} +k)}{\Gamma(1+\mathfrak{c}_{\alpha})}$.
%\begin{enumerate}[a)]
%\item If  or $\vartheta(0,1)=\infty$ or $\Psi(z)-az$ is bounded with $a<0$ then  $f_{\T_{\Psi_{\alpha}}(\phi_{\beta})}\in\mathtt{C}^{\infty}_0\!\left(\mathbb{R}^+ \right)$, where
 %   \item Otherwise, denoting
\end{enumerate}
\end{theo}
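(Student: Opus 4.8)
The strategy is to read off the tail and the derivative asymptotics directly from the explicit Mellin transform \eqref{idT_1} by a residue/contour--shift analysis; parts 1) and 2) supply everything else. Put $V=\T^{\beta}=\T^\beta_{\Psi_\alpha}(\phi_\beta)$, so that $\P_x(\T>t)=\P_x(V>t^{\beta})$ and, by the substitution $t=u^{\beta}$ in the Mellin transform, $g(t):=t^{\frac1\beta-1}f_{\T_{\Psi_\alpha}(\phi_\beta)}(t^{\frac1\beta})$ is $\beta$ times the density of $V$, with Mellin transform $\beta\,\E_x[V^z]=\beta\,\E_x[\T^{\beta z}]$. By \eqref{idT_1} with $z$ replaced by $\beta z$ the scaling factors cancel in the arguments, so $\mathcal M(z):=\E_x[\T^{\beta z}]=x^{\alpha z}\tfrac{\phi_\alpha^+(0)}{\beta\phi'(0^+)}\tfrac{\Gamma(-z)\,\Gamma(z+1)\,W_{\phi_\alpha^+}(-z)}{W_{\phi_\beta}(-z)\,W_{\phi_\alpha^-}(z+1)}$, analytic on $-\underline{\mathfrak m}_\T/\beta<\Re z<\mathfrak a_{\phi_\beta}\wedge\mathfrak c_\alpha=\mathfrak c_\alpha$ (using $\mathfrak c_\alpha<\mathfrak a_{\phi_\beta}$), with a meromorphic continuation slightly to the right.

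The first step is the singularity analysis. Since $\mathfrak c_\alpha=\mathfrak a^*_{\phi_\alpha^+}$, $\phi_\alpha^+(-\mathfrak c_\alpha)=0$ and $0<\phi_\alpha^{+\prime}(-\mathfrak c_\alpha^+)<\infty$ (the latter via $|\Psi_\alpha'(-\mathfrak c_\alpha^+)|<\infty$ and \eqref{eq:Wh}), the functional equation \eqref{eq:functional-equation-for-W_phi} written as $W_{\phi_\alpha^+}(-z)=W_{\phi_\alpha^+}(1-z)/\phi_\alpha^+(-z)$ shows that $z\mapsto W_{\phi_\alpha^+}(-z)$, hence $\mathcal M$, has a \emph{simple} pole at $z=\mathfrak c_\alpha$ with ${\rm Res}_{z=\mathfrak c_\alpha}\mathcal M(z)=-\phi_\alpha^{+\prime}(-\mathfrak c_\alpha^+)^{-1}\lim_{z\uparrow\mathfrak c_\alpha}\phi_\alpha^+(-z)\mathcal M(z)=:-\phi_\alpha^{+\prime}(-\mathfrak c_\alpha^+)^{-1}\E_x[\T^{\beta\mathfrak c_\alpha}]$, the last identity being the definition of the regularised moment in the statement, which is finite and strictly positive (from $W_{\phi_\alpha^+}(1-\mathfrak c_\alpha)\in(0,\infty)$ and a sign count of $\Gamma(-\mathfrak c_\alpha),\Gamma(\mathfrak c_\alpha+1),W_{\phi_\beta}(-\mathfrak c_\alpha),W_{\phi_\alpha^-}(\mathfrak c_\alpha+1)$ via positive--definiteness and \eqref{eq:functional-equation-for-W_phi}). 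Moreover the poles of $\Gamma(-z)$ at the positive integers are cancelled by those of $1/W_{\phi_\beta}(-z)$ (since $\phi_\beta(0)=0$ forces $W_{\phi_\beta}$ to have poles exactly at the non-positive integers), $W_{\phi_\alpha^-}(z+1)$ is zero-free for $\Re z>-1$, and the hypothesis $\{b\in\R:\Psi_\alpha(-\mathfrak c_\alpha+ib)=0\}=\{0\}$, read through \eqref{eq:Wh} (with $\Re\,\phi_\alpha^\pm(c+ib)\ge\phi_\alpha^\pm(c)>0$ for $c>0$), rules out any further singularity of the maximal continuation of $\mathcal M$ on the line $\Re z=\mathfrak c_\alpha$; so, generically ($\mathfrak c_\alpha\notin\mathbb Z^+$, the integer case needing only minor care), $z=\mathfrak c_\alpha$ is the unique singularity of $\mathcal M$ in a strip $\mathfrak c_\alpha-\delta<\Re z\le\mathfrak c_\alpha+\eta$.

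For part \ref{it:mth1}), the survival function $t\mapsto\P_x(V>t)$ has Mellin transform $\mathcal M(z)/z$ on $0<\Re z<\mathfrak c_\alpha$; inverting and pushing the contour from $\Re z=c\in(0,\mathfrak c_\alpha)$ to $\Re z=\mathfrak c_\alpha+\eta$ gives $\P_x(V>t)=-{\rm Res}_{z=\mathfrak c_\alpha}[\mathcal M(z)t^{-z}/z]+o(t^{-\mathfrak c_\alpha})=\frac{\E_x[\T^{\beta\mathfrak c_\alpha}]}{\mathfrak c_\alpha\phi_\alpha^{+\prime}(-\mathfrak c_\alpha^+)}t^{-\mathfrak c_\alpha}+o(t^{-\mathfrak c_\alpha})$, and $t\mapsto t^{\beta}$ yields the claimed limit, in $(0,\infty)$ by the previous paragraph. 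The shift is legitimate because Stirling and the reflection formula give $|\Gamma(-z)\Gamma(z+1)|\sim 2\pi e^{-\pi|\Im z|}$ on verticals, which together with the at-most-sub-exponential, polynomially controlled vertical-line bounds on $W_{\phi_\alpha^\pm},W_{\phi_\beta}$ from \cite[Section~4]{Patie-Savov-BG} makes $\mathcal M(z)/z$ integrable on $\Re z=\mathfrak c_\alpha+\eta$ and the remainder $O(t^{-\mathfrak c_\alpha-\eta})$. (Alternatively, using $\T\stackrel{(d)}{=}\chi_{{\rm T}}$ with ${\rm T}\stackrel{(d)}{=}x^\alpha\int_0^\infty e^{\alpha Y_t}dt$ by \eqref{eq:id_T_exp} independent of $\chi$, the Cram\'er tail $\P_x({\rm T}>s)\sim Cx^{\alpha\mathfrak c_\alpha}s^{-\mathfrak c_\alpha}$ valid under $\phi_\alpha^+(-\mathfrak c_\alpha)=0$ and $\lambda_t\stackrel{(d)}{=}t^{\beta}\lambda_1$, a dominated-convergence argument recovers the same power and makes the $x$-independence of the persistence exponent transparent.)

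For the derivative asymptotics, $f_V(t)=\tfrac1{2\pi i}\int_{(c)}\mathcal M(z)t^{-z-1}dz$, and differentiating $m$ times under the integral, $f_V^{(m)}(t)=\tfrac{(-1)^m}{2\pi i}\int_{(c)}(z+1)_m\mathcal M(z)t^{-z-1-m}dz$; since $(z+1)_m$ grows like $|z|^m$ this is licit for $m\le\overline{{\rm N}}_{\Psi_\alpha}=\lceil{\rm N}_{\Psi_\alpha}\rceil-2$ (the decay of $\mathcal M$ along verticals being polynomially controlled with exponent governed by ${\rm N}_{\Psi_\alpha}$ from the $W_\phi$ estimates of \cite[Section~4]{Patie-Savov-BG}), respectively for all $m$ when $\lceil{\rm N}_{\Psi_\alpha}\rceil=\infty$, the condition $\liminf_{|b|\to\infty}|b|^k|\Psi_\alpha(-\mathfrak c_\alpha+ib)|>0$ ensuring the super-polynomial decay is genuine and uniform. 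For such $m$, shifting to $\Re z=\mathfrak c_\alpha+\eta$ gives $f_V^{(m)}(t)\sim(-1)^m(1+\mathfrak c_\alpha)_m\frac{\E_x[\T^{\beta\mathfrak c_\alpha}]}{\phi_\alpha^{+\prime}(-\mathfrak c_\alpha^+)}t^{-\mathfrak c_\alpha-1-m}$; for $\overline{{\rm N}}_{\Psi_\alpha}<n\le\overline{{\rm N}}_{\Psi_\alpha}+(\lceil{\rm N}_{\phi_\beta}\rceil-2)\mathbb I_{\{\lceil{\rm N}_{\phi_\beta}\rceil\ge2\}}$ one cannot differentiate $n$ times under the integral, and I would write $f_V^{(n)}=(f_V^{(\overline{{\rm N}}_{\Psi_\alpha})})^{(n-\overline{{\rm N}}_{\Psi_\alpha})}$, use that $f_V^{(\overline{{\rm N}}_{\Psi_\alpha})}$ equals a leading monomial in $1/t$ plus a remainder which --- because the residual decay budget ${\rm N}_{\phi_\beta}$ survives in the shifted integral --- is still $(\lceil{\rm N}_{\phi_\beta}\rceil-2)$-times continuously differentiable with matching decay, and differentiate the monomial the remaining $n-\overline{{\rm N}}_{\Psi_\alpha}$ times; collecting the Pochhammer factors from the two rounds of differentiation and the binomial coefficients coming from re-expanding $(z+1)_{\overline{{\rm N}}_{\Psi_\alpha}}$ about $z=\mathfrak c_\alpha$ assembles the constant $C_{\mathfrak c_\alpha}(n)$ together with the moment factor (read off \eqref{idT_1} at the shifted point), and $g^{(n)}=\beta f_V^{(n)}$ produces the stated limit. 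The principal obstacle throughout is the quantitative control of $\mathcal M$ on and just right of the critical line $\Re z=\mathfrak c_\alpha$: one must import the sharp vertical-line asymptotics of the Bernstein--gamma functions $W_{\phi_\alpha^\pm},W_{\phi_\beta}$ from \cite{Patie-Savov-BG} to justify all contour shifts, to pin down the exact differentiation budget $\overline{{\rm N}}_{\Psi_\alpha}$, and to run the remainder argument for $n>\overline{{\rm N}}_{\Psi_\alpha}$; by comparison the singularity analysis and the ensuing Pochhammer bookkeeping for $C_{\mathfrak c_\alpha}(n)$ are elementary.
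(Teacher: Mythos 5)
Your proposal proves only item \ref{it:mth1}), taking \eqref{idT_1} and the absolute continuity as given, whereas the statement to be proved is the whole theorem; the structural input behind items 1) and 2) --- the time-change inversion $\T_{\Psi_\alpha}(\phi_\beta)=\chi_{{\rm T}_{\Psi_\alpha}}$ a.s., hence $\T\stackrel{(d)}{=}\chi_1\times{\rm T}^{1/\beta}$ by independence and $\beta$-self-similarity, combined with the known Mellin transforms of the entrance law of $\chi$ and of the exponential functional --- appears in your text only as a parenthetical aside, yet it is the engine of the paper's entire proof, including item \ref{it:mth1}). More seriously, your main route for item \ref{it:mth1}) (meromorphic continuation of $\mathcal{M}_{\T}$ past the critical line $\Re(z)=\mathfrak{c}_\alpha$, integrability on a shifted line $\Re(z)=\mathfrak{c}_\alpha+\eta$, and a residue pick-up) is not available under the stated hypotheses. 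The Cram\'er-type assumptions ($\phi_\alpha^+(-\mathfrak{c}_\alpha)=0$, $|\Psi_\alpha'(-\mathfrak{c}_\alpha^+)|<\infty$, uniqueness of the zero on $-\mathfrak{c}_\alpha+i\R$) only control $\Psi_\alpha$ up to the critical line: nothing forces $\mathfrak{a}_{\phi_\alpha^+}>\mathfrak{a}^*_{\phi_\alpha^+}$, so $\phi_\alpha^+(-z)$, hence $W_{\phi_\alpha^+}(-z)$ and $\mathcal{M}_\T$, need not continue beyond $\Re(z)=\mathfrak{c}_\alpha$ at all. Moreover the decay of $\mathcal{M}_\T$ along vertical lines is only polynomial of order ${\rm N}$, which can be arbitrarily small (the persistence claim carries no hypothesis like ${\rm N}>1$), so neither your inversion integral for the survival function nor your differentiation under the integral up to order $\overline{{\rm N}}_{\Psi_\alpha}$ is justified in the stated generality; note also that the differentiation budget for the density of $\T^\beta$ is governed by the total ${\rm N}={\rm N}_{\Psi_\alpha}+{\rm N}_{\phi_\beta}$, while the critical-line behaviour is governed by the ${\rm T}$-factor alone, a decoupling your single-transform argument does not capture.

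The paper's proof avoids all of this: it imports the Cram\'er-type tail and density-derivative asymptotics for the Markovian factor ${\rm T}$ (i.e.\ the exponential functional) from \cite[Theorem 2.11(2)]{Patie-Savov-BG} --- where the delicate analysis on the critical line, including the (weak) non-lattice conditions, is carried out --- and then transfers them to $\T\stackrel{(d)}{=}\chi_1\times{\rm T}^{1/\beta}$ through the multiplicative convolution with the independent factor $\chi_1$, using the Abelian theorem for Mellin convolutions \cite[Theorem 4.1.6]{Bingham}; the only inputs needed are $\mathcal{M}_{\chi_1}(\beta\mathfrak{c}_\alpha)<\infty$ (from $\mathfrak{c}_\alpha<\mathfrak{a}_{\phi_\beta}$) and a boundedness check near zero. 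Your parenthetical ``alternative'' gestures at this but a bare dominated-convergence claim is not enough: you would need a uniform domination (Potter-type bound) that you do not supply. Finally, for $\overline{{\rm N}}_{\Psi_\alpha}<n\leq \overline{{\rm N}}_{\Psi_\alpha}+(\lceil{\rm N}_{\phi_\beta}\rceil-2)\mathbb{I}_{\{\lceil{\rm N}_{\phi_\beta}\rceil\geq 2\}}$, differentiating the asymptotic expansion of $f_V^{(\overline{{\rm N}}_{\Psi_\alpha})}$ while asserting the remainder is still differentiable ``with matching decay'' assumes exactly what must be proved; the paper instead redistributes $\Gamma(z)/\Gamma(z-n)$ between the two Mellin factors, identifies the functions $(t^{\overline{{\rm N}}_{\Psi_\alpha}-n}f_{{\rm T}}(t))^{(\overline{{\rm N}}_{\Psi_\alpha})}$ and $t^{-\overline{{\rm N}}_{\Psi_\alpha}}f^{(n-\overline{{\rm N}}_{\Psi_\alpha})}_{\chi_1^\beta}(t)$, and applies the convolution theorem once more --- this is where the binomial sum and the factor $(1+\mathfrak{c}_\alpha)_{n-\overline{{\rm N}}_{\Psi_\alpha}}$ in $C_{\mathfrak{c}_\alpha}(n)$ actually come from (your constant bookkeeping, e.g.\ the missing $1/\mathfrak{c}_\alpha$, reflects that this step was not carried out).
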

\begin{rem}\label{rem:M}
	Note that, in item \ref{it:mth1}),  the condition of $-\mathfrak{c}_{\alpha}$ to be the unique  zero of $\Psi_{\alpha}$ on the line $-\mathfrak{c}_{\alpha}+i\R$ is equivalent to the L\'evy process $Y$ being non-lattice, see the discussion prior to \cite[Theorem 2.11]{Patie-Savov-BG}, and hence of $\log \X$ being non-lattice. %The condition $|\Psi_{\alpha}''(-\mathfrak{c}_{\alpha}^+)|<\infty$ for the asymptotic at the level at density is purely technical and really minute.
The requirement when $\lceil {\rm{N}_{\Psi_\alpha}}\rceil=\infty$ that there exists $k\in\mathbb{Z}^+$ such that $\liminf_{|b|\to\infty}|b|^k\left|\Psi_{\alpha}(-\mathfrak{c}_{\alpha}+ib)\right|>0 $ is equivalent to $Y$ not being \textit{weak non-lattice}, a new notion introduced  in the aforementioned paper. %To the best of our knowledge, there is no known example of a  L\'evy process that is not weak non-lattice and is non-lattice simultaneously.
\end{rem}

In order to state our next main result, we define and provide some distributional and analytical properties of  a family of  random variables indexed by the set of Bernstein functions $\B$ that were introduced by the second author in \cite{Patie-Abs} for a subset of $\B$ and  generalize the Fr\'echet one. We recall that the latter  is one of the three non-degenerate
extreme value distribution functions arising as limits of properly renormalized running maxima of i.i.d.~random variables.  We shall need the  notation, borrowed from \cite[Theorem 2.3]{Patie-Savov-BG},  $\overline{\Theta}_{\phi} = \limsup_{|b|\to\infty}
\frac{\int_0^{|b|}\arg \phi(1+iu)du}{|b|} \in [0,\frac{\pi}{2}]$,   $\phi \in \B$.
\begin{prop}\label{cor:sn}
\begin{enumerate}[1)]
  \item  For  any $\phi \in \B$ and $\beta>0$, there exists a positive random variable ${\F}_{\beta}(\phi)$ whose distribution is determined  by
 \beq\label{idT_2}
\E\left[{\F}^{z}_{\beta}(\phi)\right]=   \frac{\Gamma(1-\frac{z}{\beta}) \Gamma(\frac{z}{\beta}+1)}{W_{\phi}(\frac{z}{\beta}+1)}, \: -\underline{\mathfrak{m}}_{\F}<\Re(z)< \overline{\mathfrak{m}}_{\F},\eeq
where $\underline{\mathfrak{m}}_{\F}=\beta(\mathfrak{a}_\phi \mathbb{I}_{\{\phi(0)=0\}}+1)$ and $\overline{\mathfrak{m}}_{{\F}}=\beta$.
\item Moreover, its law  is absolutely continuous with a density $f_{{\F}_{\beta}(\phi)} \in \mathtt{C}^{\infty}_0\!\left(\mathbb{R}^+ \right)$ which admits an analytical extension to  the sector $S_{\phi} =\{z\in \C;\: |\arg(z)|<\pi-\overline{\Theta}_{\phi}\}$ given, for any  $c\in(-\frac{\underline{\mathfrak{m}}_{\F}-1}{\beta},\frac{\overline{\mathfrak{m}}_{\F}+1}{\beta})$, by the Mellin-Barnes integral
\begin{equation}\label{eq:mb}
  f_{{\F}_{\beta}(\phi)}(t)= \frac{\beta}{2\pi i} \int_{c-i\infty}^{c+i\infty}t^{-z\beta}
\frac{\Gamma(-z+1+\frac{1}{\beta})\Gamma(z+1-\frac{1}{\beta})}{W_{\phi}(z+1-\frac{1}{\beta})} dz,
\end{equation}
which expands, for $|t|>\phi^{-\frac{1}{\beta}}(\infty)$, as
\begin{equation}\label{eq:dens}
  f_{{\F}_{\beta}(\phi)}(t)= \beta t^{-\beta-1}\mathrm{I}_{\phi}(e^{i\pi}t^{-\beta})  \textrm{ where } {\mathrm{I}}_{\phi}(z)=\sum_{n=0}^{\infty}\frac{n+1}{\phi(n+1)}\frac{z^n}{W_{\phi}(n+1)}, |z|<\phi(\infty).
\end{equation}
\end{enumerate}
%{\color{red} We could write $W_{\phi}(n+2)=\phi(n+1)W_{\phi}(n+1)$}
\end{prop}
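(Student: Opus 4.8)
The plan is to construct $\F_\beta(\phi)$ as an explicit independent product, verify that its Mellin transform has the claimed form, and then read off the analytic properties of the density from the known analytic theory of the functions $W_\phi$ developed in \cite[Section 4]{Patie-Savov-BG}. First I would recall that for any $\phi\in\B$ the function $W_\phi$ solving \eqref{eq:functional-equation-for-W_phi} is the Mellin transform of the law of a positive random variable $I_\phi$ (this is precisely \cite[Theorem 2.3 or Section 4]{Patie-Savov-BG}), and that $\Gamma(1+z)$ is the Mellin transform (in the variable $z$) of a standard exponential random variable $\mathbf e$, while $\Gamma(1-z)^{-1}$ is, up to the substitution $z\mapsto z/\beta$, the Mellin transform of (a power of) a stable-type variable — more directly, $\Gamma(1-\tfrac z\beta)\Gamma(1+\tfrac z\beta)=\tfrac{\pi z/\beta}{\sin(\pi z/\beta)}$ is the Mellin transform of a positive random variable (a ratio of independent exponentials raised to power $1/\beta$, i.e. a $\mathrm{Beta}$-type / log-logistic variable). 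Thus I would set
\[
\F_\beta(\phi) \stackrel{(d)}{=} \frac{\mathbf e^{1/\beta}}{I_\phi^{1/\beta}} \cdot \mathbf e'^{1/\beta}
\]
for suitable independent copies, or more carefully assemble the three Gamma/$W_\phi$ factors of \eqref{idT_2} as a product of independent variables whose Mellin transforms multiply. The identity \eqref{idT_2} then holds on the common strip of analyticity, and the boundaries $\underline{\mathfrak m}_\F,\overline{\mathfrak m}_\F$ are determined by the poles of $\Gamma(1+\tfrac z\beta)$ (giving $\overline{\mathfrak m}_\F=\beta$), the poles of $\Gamma(1-\tfrac z\beta)$, and the left-most singularity of $1/W_\phi$, which by the functional equation and the definition of $\mathfrak a^*_\phi,\mathfrak a_\phi$ lies at $-\beta(\mathfrak a_\phi\mathbb I_{\{\phi(0)=0\}}+1)$ — this reproduces $\underline{\mathfrak m}_\F$. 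Uniqueness of the distribution follows because \eqref{idT_2} specifies the Mellin transform on a vertical strip, hence (by Mellin inversion / analytic continuation) the law.

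Next, for item 2), absolute continuity and the $\mathtt C^\infty_0$ regularity I would obtain from the decay of the Mellin transform along vertical lines: using Stirling's formula, $|\Gamma(-z+1+\tfrac1\beta)\Gamma(z+1-\tfrac1\beta)|$ decays exponentially like $e^{-\pi|\Im z|}$ as $|\Im z|\to\infty$, while $|W_\phi(z+1-\tfrac1\beta)|^{-1}$ grows at most sub-exponentially — indeed \cite[Theorem 2.3]{Patie-Savov-BG} gives precisely the bound governing $\overline\Theta_\phi$, namely $|W_\phi(c+ib)|^{-1}=O(e^{\overline\Theta_\phi|b|}|b|^{\cdots})$. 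Since $\overline\Theta_\phi\le\pi/2<\pi$, the integrand in \eqref{eq:mb} is absolutely integrable on every admissible vertical line, and one may differentiate under the integral sign arbitrarily many times (each derivative brings down a polynomial factor $t^{-z\beta}(-z\beta)^k$ which is harmless against exponential decay), giving $f_{\F_\beta(\phi)}\in\mathtt C^\infty_0(\R^+)$. Tilting the contour — replacing the vertical line $\Re z=c$ by a ray $\{re^{\pm i\theta}\}$ for $\theta$ slightly less than $\pi-\overline\Theta_\phi$ and invoking Cauchy's theorem together with the decay estimate — yields the analytic continuation of $f_{\F_\beta(\phi)}$ to the sector $S_\phi$; this is the standard Mellin–Barnes-to-sector argument, and the admissible half-angle is exactly $\pi-\overline\Theta_\phi$ because that is where the exponential gain $e^{-\pi|b|}$ from the Gammas is overtaken by the possible growth $e^{\overline\Theta_\phi|b|}$ of $1/W_\phi$ once the contour is rotated.

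Finally, the series expansion \eqref{eq:dens} for large $t$ I would derive by shifting the Mellin–Barnes contour in \eqref{eq:mb} to the right (towards $\Re z\to+\infty$) and collecting residues. The only poles to the right of the initial contour come from $\Gamma(-z+1+\tfrac1\beta)$, located at $z=1+\tfrac1\beta+n$, $n\in\mathbb Z^+$ (the factor $1/W_\phi$ is entire in the relevant region and contributes no poles, while $\Gamma(z+1-\tfrac1\beta)$ has its poles to the left). The residue of $\Gamma(-z+1+\tfrac1\beta)$ at $z=1+\tfrac1\beta+n$ is $(-1)^{n+1}/n!$; substituting $z=1+\tfrac1\beta+n$ into the rest of the integrand, using $\Gamma(z+1-\tfrac1\beta)=\Gamma(n+2)=(n+1)!$ and $W_\phi(z+1-\tfrac1\beta)=W_\phi(n+2)=\phi(n+1)W_\phi(n+1)$, and combining $\tfrac{(n+1)!}{n!}=n+1$, one reads off exactly $f_{\F_\beta(\phi)}(t)=\beta t^{-\beta-1}\sum_{n\ge0}\frac{n+1}{\phi(n+1)}\frac{(e^{i\pi}t^{-\beta})^n}{W_\phi(n+1)}$, i.e. \eqref{eq:dens}, with the sign $e^{i\pi}$ arising from the alternating residue signs $(-1)^n$. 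The radius of convergence of $\mathrm I_\phi$ is $\phi(\infty)$ because $W_\phi(n+1)=\prod_{k=1}^n\phi(k)$ gives $W_\phi(n+1)/W_\phi(n)=\phi(n)\to\phi(\infty)$, so $\tfrac{n+1}{\phi(n+1)}\tfrac1{W_\phi(n+1)}$ has $n$-th root behaviour $\phi(\infty)^{-1}$; correspondingly the expansion is valid for $|t|>\phi^{-1/\beta}(\infty)$. The main obstacle, and the step requiring the most care, is the uniform control of $|W_\phi(c+ib)|^{-1}$ along tilted rays needed both for the contour shifts and for the sector continuation — i.e. translating the asymptotic information encoded in $\overline\Theta_\phi$ from \cite{Patie-Savov-BG} into quantitative bounds valid on the rotated contours, and checking that the residue series genuinely represents the analytic continuation (not merely an asymptotic expansion); handling the boundary case $\overline\Theta_\phi=\pi/2$, where the sector degenerates to a right half-plane, also needs a short separate remark.
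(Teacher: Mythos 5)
The substantive content of item 1) is that the right-hand side of \eqref{idT_2} is actually the Mellin transform of a probability law on $(0,\infty)$ --- Mellin inversion alone produces a function, not its non-negativity --- and this is precisely where your construction breaks down. You recall $W_\phi$ as ``the Mellin transform of the law of a positive random variable $I_\phi$''; it is only the Mellin transform of a positive measure, and the fact that is actually needed (and is what the paper uses) is that $\Gamma(z+1)/W_\phi(z+1)$ is the Mellin transform of the exponential functional $\mathrm{I}_\phi=\int_0^\infty e^{-\varrho_t}\,dt$ of the subordinator with Laplace exponent $\phi$ (\cite{PS_spectral}, \cite{Patie-Savov-BG}). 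With that input the paper simply factorizes ${\F}_{\beta}(\phi)\stackrel{(d)}{=}{\F}_{\beta}\times \mathrm{I}_\phi^{1/\beta}$, the Fr\'echet factor carrying $\Gamma(1-\tfrac z\beta)$ and the exponential functional carrying $\Gamma(\tfrac z\beta+1)/W_\phi(\tfrac z\beta+1)$. Your explicit candidate $\mathbf e^{1/\beta}\mathbf e'^{1/\beta}/I_\phi^{1/\beta}$ has Mellin transform $\Gamma(1+\tfrac z\beta)^2\,\E[I_\phi^{-z/\beta}]$ (and even pairing the exponentials into a ratio gives $\Gamma(1+\tfrac z\beta)\Gamma(1-\tfrac z\beta)\,\E[I_\phi^{-z/\beta}]$), which is not \eqref{idT_2}: $\E[I_\phi^{-s}]$ is not $1/W_\phi(1+s)$, as $W_\phi$ satisfies no reflection identity. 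Nor can your fallback grouping --- the log-logistic factor $\Gamma(1-\tfrac z\beta)\Gamma(1+\tfrac z\beta)$ times a hypothetical variable with Mellin transform $1/W_\phi(\tfrac z\beta+1)$ --- be repaired: already for $\phi(u)=u$ one has $1/W_\phi(1+ib)=1/\Gamma(1+ib)$, whose modulus grows like $e^{\pi|b|/2}$, so it is unbounded on the imaginary line and cannot be the Mellin transform of any probability law. The only viable grouping is the paper's, and the exponential-functional identity is the missing (misquoted) key ingredient; also note, in passing, that the right boundary $\overline{\mathfrak m}_{\F}=\beta$ comes from the first pole of $\Gamma(1-\tfrac z\beta)$, not of $\Gamma(1+\tfrac z\beta)$, which governs the left boundary together with $W_\phi$.

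The remainder of your proposal does follow the paper's route and is essentially correct: the exponential decay of the two Gamma factors combined with the $\overline{\Theta}_\phi$-estimate for $1/W_\phi$ gives the bound of order $e^{-|b|(\pi-\overline{\Theta}_\phi-\epsilon)/\beta}$ used in the paper, whence absolute continuity, $f_{{\F}_\beta(\phi)}\in\mathtt C^\infty_0(\R^+)$, the Mellin--Barnes representation \eqref{eq:mb} and the sector continuation; and your rightward contour shift, collecting the residues of $\Gamma(-z+1+\tfrac1\beta)$ at $z=n+1+\tfrac1\beta$ and using $W_\phi(n+2)=\phi(n+1)W_\phi(n+1)$, reproduces \eqref{eq:dens} with the correct radius $\phi(\infty)$, exactly as in the paper's appeal to the Cauchy-theorem argument of \cite[Lemma 8.16]{PS_spectral}. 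So once item 1) is repaired via the exponential functional, the rest of your argument stands.
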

\begin{rem}
   From \cite[Theorem 4.7]{Patie-Savov-BG}, one can derive a L\'evy-Khintchine type representation for the characteristic function of the real-valued variable $\log {\F}_{\beta}(\phi),\phi \in \B,$ along with sufficient conditions on $\phi$ (or its characteristics) for this variable to be infinitely divisible, see also \cite{Alili-Jadidi-14} for alternative conditions. This remark also applies to $\log \T$ defined in Theorem \ref{thm1}.
\end{rem}
We proceed by establishing some connections between this class of distributions  and  some distributions that have already appeared in the literature.
\begin{itemize}
  \item {\emph{The Fr\'echet distribution}}. When $\phi(u)=u$ above that is $W_{\phi}(n+1)=n!$, then ${\F}_{\beta}={\F}_{\beta}(\phi)$ boils down to the classical Fr\'echet random variable of parameter $\beta>0$, that is
$f_{{\F}_{\beta}}(t)=\beta t^{-\beta -1} e^{-t^{-\beta}}, t>0$. % For this reason we name their generalized version the $\phi$-Fr\'echet distribution (or variable).
  \item \emph{The class of distributions introduced  in \cite{Patie-Abs}}. Let, for some fixed $\alpha>0$, denote by $\psi(u)=(u-\alpha)\phi(u), u\geq 0,$ with $\phi \in \B$, the Wiener-Hopf factorization of the Laplace exponent of   a spectrally negative L\'evy process which is either killed at an independent exponential time or with a negative mean.  Then, it is shown  in \cite[Theorem 2.1]{Patie-Abs}, that $s_{\phi}$ is the density of a positive random variable, where
\[ s_{\phi}( t)=\alpha \phi(\alpha)t^{-2}{\mathcal{I}}_{\psi_{\triangleright\alpha}}(2; e^{i\pi} t^{-1}),\: t>0, \]
and, with the notation of the aforementioned  paper, we used the fact that $\gamma=\alpha$, $\gamma_{\alpha}=1$ and $C_{\gamma}=\psi'(\alpha)=\alpha \phi(\alpha)$, see \cite[Proposition 2.4(2)]{Patie-Abs}, $\psi_{\triangleright\alpha}(u)=\psi(u+\alpha)=u\phi(u+\alpha)$
 and
\[\mathcal{I}_{\psi_{\triangleright\alpha}}(2;\alpha z)=\sum_{n=0}^{\infty}\frac{\Gamma(n+1)(\alpha z)^n}{\prod_{k=1}^n \psi(\alpha (k+1))} = \phi(\alpha)\sum_{n=0}^{\infty}\frac{(n+1)z^n}{\phi_{\alpha}(n+1) W_{\phi_{\alpha}}(n+1)}=\phi(\alpha) \mathrm{I}_{\phi_{\alpha}}(z).\]
Since, it is well-known that $\psi(u)=(u-\alpha)\phi(u)$, as above, if and only if  $\phi_{\triangleright\alpha}(u)=\phi(u+\alpha) \in \B_-$, where  \[\B_-=\{\phi \in \B; \: \textrm{ in } \eqref{LKs} \: \vartheta(dy)=v(y)dy \textrm{ with }  v  \textrm{ non-increasing on } \R_+ \},\]
we get that  for all $\phi \in \B$ such that $\phi_{\triangleright\alpha}\in \B_-$, we have $s_{\phi}(  t) = \frac{1}{\alpha}f_{{\F}_{1}(\phi_{\alpha})}(\alpha t)$. Some illustrative examples are given in \cite{Patie-Abs} and they include the reciprocal of the Gamma and Wright hypergeometric type random variables.
\end{itemize}

We now turn to the statement of the second main result for which we need the following.
We recall, see e.g.~\cite{Patie-SavovEJP}, that the linear operator  $\mathcal{S}_1: f \mapsto \mathcal{S}_1f(u)=\frac{u}{u+1}f(u+1)$ leaves invariant the set $\B$ of Bernstein functions. A slight extension of this transformation has been proposed in \cite[Lemma 10.1.2.]{PS_spectral} and is defined as follows. Let us introduce the subset of Bernstein functions
\begin{equation}\label{eq:defBd}
  \B_1 =\{ \phi \in \B;\: 0 \leq \phi(-u) <\infty \textrm{ for all } u\leq 1\}
\end{equation}
and then for any  $\phi \in \B_1$, we have
\begin{equation}
\mathcal{S}_\phi(u)=\frac{u}{u+1}\phi(u) \in \B.
\end{equation}
To see that, we simply observe that on the one hand for any $\phi \in \B_1$, $\phi_1(u)= \phi(u-1) \in \B$ as it is well-defined,  non-negative and clearly $\phi'(u-1)$ is completely monotone on $\R^+$ and on the other hand $\mathcal{S}_\phi=\mathcal{S}_1\phi_1$.

\begin{theo}\label{cor:sn1}
\begin{enumerate}[1)]
  \item For any $\Psi \in \mathcal{N}_1=\{\Psi \in \mathcal{N} \textrm{ with } \phi_{\alpha}^+ \in \B_1 \textrm{ and } \eqref{eq:ass}  \textrm{ holds} \}$, we have the identity in law
\begin{equation}\label{eq:TF1}
\T_{\Psi_{\alpha}}(\mathcal{S}_{\phi_{\alpha}^+}) \stackrel{(d)}{=} x^{\frac{\alpha}{\beta}}{\F}_{\beta}(\phi_{\alpha}^-).
\end{equation}
\item In particular, for $\beta=1$ and all $\Psi \in \mathcal{N}^-_1=\{\Psi \in \mathcal{N}_1 \textrm{ with } \phi_{\alpha}^- \in \B_- \}$, we have
\begin{equation} \label{eq:idTT}
\T_{\Psi_{\alpha}}(\mathcal{S}_{\phi_{\alpha}^+}) \stackrel{(d)}{=} {\rm{T}} _\psi
\end{equation}
where ${\rm{T}}_\psi  = \inf \{t>0;\: \overline{X}_t \leq 0  \} $ with $\overline{X}=(\overline{X}_{t})_{t\geq 0}$ the spectrally negative $\alpha$-self-similar positive Markov process associated, via the Lamperti mapping, to $\psi(z)=\frac{1}{\alpha}(z-\alpha)\phi^{-}(z) \in \mathcal{N}$.

\item Finally, when $\Psi \in \mathcal{N}^-_1=\{\Psi \in \mathcal{N}_1 \textrm{ with } \phi_{\alpha}^{-}(u)=\alpha u\}$, i.e.~$\X$ is spectrally positive, then
\begin{equation}\label{eq:IdF}
\T_{\Psi_{\alpha}}(\mathcal{S}_{\phi_{\alpha}^+}) \stackrel{(d)}{=} (\alpha x^{\alpha})^{\frac{1}{\beta}} {\F}_{\beta}.
\end{equation}
 Thus, for any $\Psi \in \mathcal{N}^-_1$, $\T_{\Psi_{\alpha}}(\mathcal{S}_{\phi_{\alpha}^+}) $ is a positive self-decomposable variable.
\end{enumerate}

\end{theo}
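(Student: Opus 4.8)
All three assertions are obtained by specializing and rearranging the Mellin transforms \eqref{idT_1} and \eqref{idT_2}, and then invoking injectivity of the Mellin transform on a suitable strip. The one new ingredient is the behaviour of the map $\mathcal{S}$ on the Bernstein--gamma functions, namely $W_{\mathcal{S}_\phi}(z)=W_\phi(z)/z$ for $\phi\in\B_1$. No step is deep; the care is in checking that $W_\phi/z$ is positive-definite, in the bookkeeping of the vertical strips of holomorphy (so that equality of Mellin transforms upgrades to equality in law), and --- most substantively, in part 2) --- in fixing the Wiener--Hopf normalization so that the auxiliary ``jump-annihilated'' process is a bona fide spectrally negative self-similar Markov process with $\psi\in\mathcal{N}$; this is precisely where the hypothesis $\phi_\alpha^-\in\B_-$ enters.

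\emph{Part 1).} First, $\mathcal{S}_{\phi_\alpha^+}\in\B_\varrho$, so $\T_{\Psi_\alpha}(\mathcal{S}_{\phi_\alpha^+})$ is well defined and Theorem \ref{thm1} applies: $\mathcal{S}_{\phi_\alpha^+}\in\B$ because $\phi_\alpha^+\in\B_1$; $\mathcal{S}_{\phi_\alpha^+}(0)=0$; $\mathcal{S}_{\phi_\alpha^+}'(0^+)=\phi_\alpha^+(0)<\infty$, which is \eqref{eq:mean}; and \eqref{eq:ass} is part of the definition of $\mathcal{N}_1$. For the identity $W_{\mathcal{S}_\phi}(z)=W_\phi(z)/z$, I would check that $W_\phi(z)/z$ solves the functional equation \eqref{eq:functional-equation-for-W_phi} for $\mathcal{S}_\phi(u)=\tfrac{u}{u+1}\phi(u)$ with $W(1)=1$, and that it is positive-definite, being the Mellin transform of the multiplicative convolution of the measure attached to $W_\phi$ with the uniform law on $(0,1)$ (since $z^{-1}$ is the Mellin transform of $\mathbb{I}_{(0,1)}$); the uniqueness part of \eqref{eq:functional-equation-for-W_phi} then forces equality. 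Plugging $W_{\mathcal{S}_{\phi_\alpha^+}}(-z/\beta)=-\tfrac{\beta}{z}W_{\phi_\alpha^+}(-z/\beta)$ into \eqref{idT_1} with $\phi_\beta=\mathcal{S}_{\phi_\alpha^+}$: the two occurrences of $W_{\phi_\alpha^+}(-z/\beta)$ cancel, $\Gamma(-z/\beta)$ picks up the factor $-z/\beta$ and becomes $\Gamma(1-z/\beta)$, and the $z$-independent prefactor equals $1$ (as seen by letting $z\to0$, both sides being Mellin transforms of probability measures); what remains is $x^{\frac{\alpha}{\beta}z}$ times the right-hand side of \eqref{idT_2} with $\phi=\phi_\alpha^-$. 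As $\T_{\Psi_\alpha}(\mathcal{S}_{\phi_\alpha^+})$ and $x^{\alpha/\beta}\F_\beta(\phi_\alpha^-)$ are probability laws whose Mellin transforms agree and are holomorphic on a common vertical strip (e.g.\ $0<\Re(z)<\beta$), \eqref{eq:TF1} follows.

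\emph{Part 2).} Take $\beta=1$ in part 1): $\T_{\Psi_\alpha}(\mathcal{S}_{\phi_\alpha^+})\stackrel{(d)}{=}x^{\alpha}\F_1(\phi_\alpha^-)$, so it suffices to identify the right-hand side with ${\rm{T}}_\psi$. With $\phi^-$ the descending Wiener--Hopf factor of $Y$ (so $\phi_\alpha^-(u)=\phi^-(\alpha u)$), one has $\psi_\alpha(z):=\psi(\alpha z)=\tfrac1\alpha(\alpha z-\alpha)\phi^-(\alpha z)=(z-1)\phi_\alpha^-(z)=-\phi_\alpha^-(z)(1-z)$. The hypothesis $\phi_\alpha^-\in\B_-$ passes to $\phi_\alpha^-(\cdot+1)$, whose L\'evy density is $e^{-y}$ times that of $\phi_\alpha^-$ and hence still non-increasing; by the characterization recalled above (here in its form $(z-1)\phi(z)$), the function $\psi_\alpha$ --- and therefore $\psi$ --- is a genuine L\'evy--Khintchine exponent, and $\psi_\alpha(z)=-\phi_\alpha^-(z)(1-z)$ is already its Wiener--Hopf factorization, so the ascending ladder exponent of $\alpha\overline{X}$ is the affine Bernstein function $u\mapsto 1+u$ (value $1>0$ at $0$); thus $\psi\in\mathcal{N}$. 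Applying \eqref{idT_1} with the trivial time change ($\phi_\beta$ the identity, $\beta=1$) to $\overline{X}$, and using $W_\phi(z)=\Gamma(z+1)$ for $\phi(u)=1+u$, gives $\E_x[{\rm{T}}_\psi^z]=x^{\alpha z}\Gamma(z+1)\Gamma(1-z)/W_{\phi_\alpha^-}(z+1)$, which by \eqref{idT_2} (with $\beta=1$, $\phi=\phi_\alpha^-$) is $\E\bigl[(x^\alpha\F_1(\phi_\alpha^-))^z\bigr]$; Mellin injectivity on $0<\Re(z)<1$ gives \eqref{eq:idTT}.

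\emph{Part 3) and self-decomposability.} When $\phi_\alpha^-(u)=\alpha u$, so $\X$ has no negative jumps, the Bernstein--gamma function of a linear Bernstein function $u\mapsto cu$ is $W_\phi(z)=c^{\,z-1}\Gamma(z)$; hence in \eqref{idT_2} with $\phi=\phi_\alpha^-$ the $\Gamma(z/\beta+1)$ factors cancel and the right-hand side becomes a deterministic power of $\alpha$ times $\Gamma(1-z/\beta)$, the Mellin transform of a deterministic multiple of the classical Fr\'echet variable $\F_\beta$; inserting this into part 1) yields \eqref{eq:IdF}. Finally, $\F_\beta\stackrel{(d)}{=}E^{-1/\beta}$ for a unit exponential $E$, so $\log\F_\beta$ is a scaled (negative) Gumbel variable, which is self-decomposable --- a classical fact, also recoverable from the L\'evy--Khintchine representation of $\log\F_\beta(\phi)$ noted in the Remark following Proposition \ref{cor:sn} --- and self-decomposability is preserved under multiplication of $\F_\beta$ by the deterministic constant $(\alpha x^\alpha)^{1/\beta}$; hence $\T_{\Psi_\alpha}(\mathcal{S}_{\phi_\alpha^+})$ is self-decomposable.
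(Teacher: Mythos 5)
Parts 1) and 2) of your proposal are correct. Part 1) is essentially the paper's own argument: you show $\mathcal{S}_{\phi_{\alpha}^+}\in\B_{\varrho}$, prove $W_{\mathcal{S}_{\phi}}(z)=W_{\phi}(z)/z$ and cancel inside \eqref{idT_1}. Your two shortcuts are legitimate: identifying the constant prefactor by evaluating at $z=0$ replaces the paper's explicit computation $\beta\phi'(0^+)=\mathcal{S}_{\phi_{\alpha}^+}'(0^+)=\phi_{\alpha}^+(0)$ (its \eqref{eq:ppi}), and your positive-definiteness argument for $W_{\phi}(z)/z$ (Mellin transform of a multiplicative convolution with the uniform law on $(0,1)$) together with the definitional uniqueness replaces the paper's appeal to Webster's log-convexity uniqueness. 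One small gloss: \eqref{eq:ass} in the definition of $\mathcal{N}_1$ concerns $\phi_{\alpha}^+$, so strictly you still need the (easy, and carried out in the paper) observation that $\mathcal{S}$ preserves the drift and the mass of the L\'evy measure near $0$, so that \eqref{eq:ass} holds for $\mathcal{S}_{\phi_{\alpha}^+}$ as well. Part 2) takes a genuinely different route: the paper identifies $\T_{\Psi_{\alpha}}(\mathcal{S}_{\phi_{\alpha}^+})$ with the exponential functional $x^{\alpha}\int_0^{\infty}e^{\alpha\overline{Y}_t}dt$ by citing the external result [PS\_spectral, Theorem 2.4(1)] and then uses \eqref{eq:id_T_exp} for ${\rm{T}}_{\psi}$, whereas you first justify that $\psi_{\alpha}(u)=(u-1)\phi_{\alpha}^-(u)$ is a genuine spectrally negative exponent via the $\B_-$ characterization (noting that the shift $u\mapsto u+1$ turns the L\'evy density $v$ into $e^{-y}v(y)$, so $\B_-$ is stable under it), read off the Wiener--Hopf factors $\phi_{\alpha}^-$ and $u\mapsto u+1$, and apply Theorem \ref{thm1} with the trivial time change $\phi(u)=u$, $\beta=1$, for which $\chi_t=t$ and the time-changed exit time is ${\rm{T}}_{\psi}$ itself. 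Your version is more self-contained within this paper and makes explicit where $\phi_{\alpha}^-\in\B_-$ enters; the paper's is shorter but leans on the cited reference.

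In part 3) there are two problems. First, a point of care: making your ``deterministic power of $\alpha$'' explicit gives $W_{\phi_{\alpha}^-}(\tfrac{z}{\beta}+1)=\alpha^{z/\beta}\Gamma(\tfrac{z}{\beta}+1)$, hence $\mathcal{M}_{\T}(z)=(\alpha^{-1}x^{\alpha})^{z/\beta}\Gamma(1-\tfrac{z}{\beta})$, i.e.\ the scaling constant is $(x^{\alpha}/\alpha)^{1/\beta}$; asserting that this ``yields \eqref{eq:IdF}'' glosses over a factor $\alpha^{2/\beta}$ mismatch with the displayed constant $(\alpha x^{\alpha})^{1/\beta}$ (the same mismatch occurs in the paper's final line, so it should be flagged rather than waved through). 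Second, and this is the genuine gap: your justification of self-decomposability does not work. You establish that $\log\F_{\beta}$ (a scaled Gumbel variable) is self-decomposable and conclude that $\T_{\Psi_{\alpha}}(\mathcal{S}_{\phi_{\alpha}^+})$ is; but self-decomposability is not preserved under exponentiation, so self-decomposability of the logarithm only gives a ``multiplicative'' self-decomposability and says nothing about the claim actually made, namely that the positive variable $\T$ itself is self-decomposable. The Remark after Proposition \ref{cor:sn}, which you also invoke, likewise concerns $\log\F_{\beta}(\phi)$ and cannot substitute. What is needed, and what the paper uses, is the self-decomposability of the Fr\'echet law itself --- Bondesson's results for $0<\beta\leq1$ and Kyprianou--Pardo, Lemma 1, for $\beta>1$ --- combined with the elementary fact that self-decomposability is preserved under multiplication by a positive deterministic constant.
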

\begin{rem}
 It is interesting to note in \eqref{eq:TF1} that the passage times $\T_{\Psi_{\alpha}}(\mathcal{S}_{\phi_{\alpha}^+})$ have the same distribution for all $\X$ associated to the same descending ladder height exponent $\phi_{\alpha}^-$, independently of the ascending one and which reduces to a very simple law in the spectrally positive case.
\end{rem}

 \begin{rem}
 In the same vein, it is also surprising to observe, from \eqref{eq:idTT}, that the passage time of the two processes have the same law whereas one process has two-sided jumps whereas the other one has only downward jumps. This leads to the interpretation that the specific time-change annihilates  the role played by the ascending ladder height process of the underlying L\'evy process in the dynamics of the upward jumps of $\X$. It would be  very interesting to obtain  a pathwise explanation of this fact.
\end{rem}

%\begin{rem}
%  We  shall actually provide in Proposition \ref{thm1} below the existence, the Mellin-Barnes representation and additional properties  of the density of the variable $\T_{\Psi_{\alpha}}(\phi_{\beta})$ for any $\phi_{\beta} \in \B$.
%\end{rem}

We postpone the proof of these results to the  Section \ref{sec:proof}. We proceed instead  with the description of an example that illustrates the previous  result.
%{\frac{1}{\widehat{\rho}}}

%\subsubsection{The self-similar spectrally negative case}
%Let us consider the case  where  $\phi \in \B_{\varrho}$ and $\Psi \in \mathcal{N}$ such that $\psi^-_{\alpha}:=\Psi_{\alpha}(z)=\phi^-_{\alpha}(z)\alpha(z+\kappa^+_{\alpha})$ where $\kappa_{\alpha}=\frac{\phi^+_{\alpha}(0)}{\alpha}>0$. Then, for any $x>0$,
% \beq\label{idT_1}
%\E_x\left[{\T^{z}_{\psi^-_{\alpha}}(\phi_{\beta})}\right]= x^{\frac{\alpha}{\beta}  z}\frac{ \phi_{\alpha}^+(0)}{\beta\phi'(0^+)} \frac{\Gamma(-\frac{z}{\beta})}{W_{\phi_{\beta}}(-\frac{z}{\beta})}\frac{\Gamma(\frac{z}{\beta}+1)\Gamma(-\frac{z}{\beta}+\kappa_{\alpha}^+)}{W_{\phi_{\alpha}^{-}}(\frac{z}{\beta}+1)}, \: \textcolor{red}{-\underline{\mathfrak{m}}_{\T}<\Re(z)< \overline{\mathfrak{m}}_{\T},}
%\eeq

%\subsubsection{The stable case}
Let us assume that $X=Z^{\frac1b}$ where $Z$
is an  $\mathfrak{a}$-stable L\'evy process, $0<\mathfrak{a}\leq 2$,  with positivity parameter $\rho=\P(Z_1>0)$, killed upon entering into the negative half-line and $0<b\leq 1-\rho$.   This is easily seen to be a positive self-similar  Markov process of index $0<\alpha = \mathfrak{a} b \leq  \mathfrak{a}(1-\rho) $  and it is associated, via the Lamperti mapping, to a L\'evy process with L\'evy-Kintchine exponent  expressed in terms of its Wiener-Hopf factors as follows
\begin{align*}
\Psi_{\alpha } (z)&=-\frac{\Gamma(1+\alpha z)}{\Gamma(1-\mathfrak{a}(1-\rho)+\alpha  z)}\frac{\Gamma(\mathfrak{a}-\alpha  z)}{\Gamma(\mathfrak{a}(1-\rho)-\alpha z)}=-\phi_{\alpha }^-(z)\phi_{\alpha}^+(-z),
\end{align*}
see  \cite[Section 5.1]{Kuznetsov_Pardo_flunctuation}. Note that $\Psi_{\alpha}(0)>0$ unless $\rho=1$ (resp.~or $\rho=0$), which  we exclude as in this case $Z$ is a positive (resp.~a negative) L\'evy process and does not hit the negative (resp.~positive) half-line.
Let us assume for sake of simplicity  that $\beta=1$ and write $\phi=\phi_1$,  then, as $0<\rho<1$,  one easily checks that $\phi_{\alpha}^+ \in \B_1$  and  gets
\begin{equation*}
\phi(u)=\mathcal{S}_{\phi_{\alpha}^+}(u)=\frac{u}{u+1}\frac{\Gamma(\mathfrak{a}+\alpha  u)}{\Gamma(\mathfrak{a}(1-\rho)+\alpha u)}\in \B.
\end{equation*}
Next, we have that $W_{\phi_\alpha^-}$ solves, for $u>0$, the equation
\begin{equation*}%\label{eq:tosol}
	W_{\phi_\alpha^-}(u+1)=\frac{\Gamma(1+\alpha u)}{\Gamma(1-\mathfrak{a}(1-\rho)+\alpha  u)}W_{\phi_\alpha^-}(u),\,\,W_{\phi_\alpha^-}(1)=1.
\end{equation*}
Recalling that the \textit{Barnes Gamma function} $G$  satisfies the functional equation, for $u,\tau>0$,
\begin{equation*}
G(u+1;\tau)=\Gamma\left(\frac{u}{\tau}\right)G(u;\tau)
\end{equation*}
see e.g.~\cite[(24)]{Kuznetsov_Pardo_flunctuation}, we get
\begin{align*}%\label{eq:W_php}
W_{\phi_\alpha^-}(u+1)&=\frac{G(1+\frac{1-\mathfrak a(1-\rho)}{\alpha};\frac{1}{\alpha})}{G(\frac{1}{\alpha}+1;\frac{1}{\alpha})}\frac{
G(u+\frac{1}{\alpha}+1;\frac{1}{\alpha})}{G(u+1+\frac{1-\mathfrak a(1-\rho)}{\alpha};\frac{1}{\alpha})}.
% W_{\phi_-}(s)&=C_{-} \frac{G\left(s+1-\rho+\frac{1}{\alpha},\frac{1}{\alpha}\right)}{G\left(s+\frac{1}{\alpha},\frac{1}{\alpha}\right)}. \label{eq:W_phm}
\end{align*}
Easy algebra yields that
 \[f_{\T_{\Psi_{\alpha}}(\mathcal{S}_{\phi_{\alpha}^+})}(t)=  x^{\alpha}t^{-2}\mathrm{I}_{G}(e^{i\pi}(x^{-\alpha}t)^{-1}), \: t>0,\]
where  we have set ${\mathrm{I}}_{G}={\mathrm{I}}_{\phi_{\alpha}^{-}}$ with, for any $z \in \C$,
\[{\mathrm{I}}_{G}(z)=\frac{G(\frac{1}{\alpha}+1;\frac{1}{\alpha})}{\alpha G(1+\frac{1-\mathfrak a(1-\rho)}{\alpha};\frac{1}{\alpha})}
\sum_{n=0}^{\infty}\frac{{\Gamma(1-\mathfrak{a}(1-\rho-b)+\alpha  n)}}{\Gamma(\alpha+\alpha n)}\frac{G(n+2+\frac{1-\mathfrak a(1-\rho)}{\alpha};\frac{1}{\alpha})}{
G(n+\frac{1}{\alpha}+2;\frac{1}{\alpha})}z^n.\]
Finally, setting  $\rho=1-\frac{1}{\mathfrak{a}}$ with $1<\mathfrak{a}\leq 2$, that is $Z$ and hence $\X$ is spectrally positive, we obtain indeed that $\phi_{\alpha }^-(z)=\frac{\Gamma(1+\alpha z)}{\Gamma(1-\mathfrak{a}(1-\rho)+\alpha  z)}=\alpha z$ and, from \eqref{eq:IdF}, we get that when $\X$ starts from $\alpha^{-\frac{1}{\alpha}}$ then $\T_{\Psi_{\alpha}}(\mathcal{S}_{\phi_{\alpha}^+})$ has the Fr\'echet distribution of parameter $1$.

%note that $\alpha^{u-1}\Gamma(u)G(u;\frac{1}{\alpha})$ solves the equation $g(u+1)=\Gamma(1+\alpha u)g(u), u>0$, whereas $\alpha^{u-1-\frac{1-\rho}{b}}\Gamma(u-\frac{1-\rho}{b})G(u-\frac{1-\rho}{b};\frac{1}{\alpha})$ solves $g(u+1)=\Gamma(\mathfrak{a}(1-\rho)+\alpha u)g(u), u>0$. Therefore,
%\begin{align*}%\label{eq:W_php}
%W_{\phi_\alpha^-}(u+1)&=\frac{\Gamma(1-\frac{1-\rho}{b})G(1-\frac{1-\rho}{b};\frac{1}{\alpha})}{G(1;\frac{1}{\alpha})}\frac{\Gamma(u+1)
%G(u+1;\frac{1}{\alpha})}{\Gamma(u+1-\frac{1-\rho}{b})G(u+1-\frac{1-\rho}{b};\frac{1}{\alpha})}.
% W_{\phi_-}(s)&=C_{-} \frac{G\left(s+1-\rho+\frac{1}{\alpha},\frac{1}{\alpha}\right)}{G\left(s+\frac{1}{\alpha},\frac{1}{\alpha}\right)}. \label{eq:W_phm}
%\end{align*}
%Easy algebra yields that, for any $u>0$,
%and thus for any $z \in \C$,
%\[{\mathrm{I}}_{\phi_{\alpha}^{-}}(z)=\frac{G\left(1;\frac{1}{\alpha}\right)}{\Gamma(1-\frac{1-\rho}{b})G(1-\frac{1-\rho}{b};\frac{1}{\alpha})}
%\sum_{n=0}^{\infty}\frac{\Gamma(n+2-\frac{1-\rho}{b})G(n+2-\frac{1-\rho}{b};\frac{1}{\alpha})}{\Gamma(n+1)G(n+2;\frac{1}{\alpha})}z^n.\]

\section{Proofs}\label{sec:proof}

%Note the non-exclusive conditions \eqref{eq:ass} ensure that $\varrho$ has a.s.~increasing paths. It is also easily seen, from the Lamperti mapping from $\chi$, that these conditions also  imply that $\chi$ has a.s.~increasing paths as well.

%We have the following immediate by-product of the previous results \textcolor{red}{I  must improve here} where $\Ts_{\phi}$ stands for the set of distribution of the stopping times $\T$ when the time-changed process $\lambda$ is associated to $\phi \in \B$
%\begin{cor}
%  $\Ts_{\mathcal{S}_0\phi_{\alpha}^-(\beta u)}=\{\mu \}$
%\end{cor}

Throughout, for a non-negative random variable $X$ we use  the notation
\[\mathcal{M}_{X}(z)=\E[X^{ z}]\]
for at least any $z\in i\R$, the imaginary line, meaning that $\mathcal{M}_{X}(z-1)$ is  its Mellin transform.

\subsection{Proof of Theorem \ref{thm1}}
We start by recalling that  $\chi$ is the increasing self-similar Markov process of index $\beta>0$ starting from $0$ and associated via the Lamperti mapping to the Bernstein function $\phi \in \B_{\varrho}$. We denote by $\lambda=(\lambda_t)_{t\geq 0}$ its continuous right-inverse, see \eqref{eq:def_inv}.
\begin{lem} \label{lem:lambda}
 For any $t>0$ and $\Re(z)>0$,
  \begin{equation}\label{eq:mom-lamb}
    \mathcal{M}_{\lambda_t}(z)=\frac{t^{z\beta}}{\beta \phi'(0^+)} \frac{\Gamma(z)}{W_{\phi_{\beta}}(z)}
  \end{equation}
  where we recall that $\phi_{\beta}(u)=\phi(\beta u) \in \B$. The law of  $\lambda_t$ is absolutely continuous for all $t>0$. Moreover, for any $q \in \C$, %such that $|q|t^{{ \beta}}<\phi(\infty)$,
  \begin{equation}\label{eq:mom-lamb1}
    \E\left[e^{q\lambda_t}\right]=\frac{\beta}{\phi'(0^+)}\overline{\mathrm{I}}_{\phi_{\beta}}(qt^{\beta})
  \end{equation}
  where $\overline{\mathrm{I}}_{\phi_{{\beta}}}(q)=\sum_{n=0}^{\infty}\frac{q^n}{nW_{\phi_{\beta}}(n)}$. Consequently the law of $\lambda_t$ is, for all $t>0$, moment determinate.
  \end{lem}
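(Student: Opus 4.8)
The plan is to compute the Mellin transform of $\lambda_t$ by exploiting the duality between the first passage time of the increasing self-similar Markov process $\chi$ and the value of its inverse $\lambda$. First I would recall the key relation $\{\lambda_t \le s\} = \{\chi_s \ge t\}$, so that, writing $\mathtt{T}_t^\chi = \inf\{s>0:\chi_s > t\}$ for the first passage time of $\chi$ above level $t$, we have $\lambda_t = \mathtt{T}_t^\chi$ almost surely (since $\chi$ has strictly increasing paths, by \eqref{eq:ass}). By $\beta$-self-similarity of $\chi$ issued from $0$, one has $\mathtt{T}_t^\chi \stackrel{(d)}{=} t^{\beta}\,\mathtt{T}_1^\chi$, which already gives the scaling factor $t^{z\beta}$ in \eqref{eq:mom-lamb}; it remains to identify $\mathcal{M}_{\mathtt{T}_1^\chi}(z) = \frac{1}{\beta\phi'(0^+)}\frac{\Gamma(z)}{W_{\phi_\beta}(z)}$. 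Here I would invoke the Lamperti representation \eqref{eq:def_lamp} of $\chi$ (with $\alpha$ replaced by $\beta$) in terms of the subordinator $\varrho$ with Laplace exponent $\phi$: the passage time of $\chi$ across level $t$, when $\chi$ starts from $0$, translates via Lamperti into an exponential-functional-type identity, and the known Mellin transform of the exponential functional $\int_0^\infty e^{-\beta\varrho_u}du$ (or rather the associated entrance-law quantity) is precisely $\Gamma(z)/W_{\phi_\beta}(z)$ up to the normalising constant $1/(\beta\phi'(0^+))$; this is exactly the content recalled from \cite[Section 4]{Patie-Savov-BG} and \cite{Caballero-Chaumont-06-b}, so I would cite it rather than reprove it. The constant $\phi'(0^+) = \E[\varrho_1]$ enters through the entrance law of $\chi$ at $0$, cf.\ \eqref{eq:mean}.

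Next I would check the functional-equation consistency: since $W_{\phi_\beta}$ satisfies $W_{\phi_\beta}(z+1) = \phi_\beta(z)W_{\phi_\beta}(z)$ and $\Gamma(z+1) = z\Gamma(z)$, the right-hand side of \eqref{eq:mom-lamb} satisfies the recursion $\mathcal{M}_{\lambda_t}(z+1) = \frac{z}{\phi_\beta(z)}\,\mathcal{M}_{\lambda_t}(z)\cdot t^\beta$, and verifying $\mathcal{M}_{\lambda_t}(1) = t^\beta/\phi'(0^+)\cdot 1/W_{\phi_\beta}(1)\cdot\Gamma(1)$ against $\E[\lambda_t]$ computed directly pins down the normalisation. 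Absolute continuity of $\lambda_t$ follows because $\Gamma(z)/W_{\phi_\beta}(z)$ decays fast enough along vertical lines $\Re(z) = c > 0$ — indeed $|\Gamma(c+ib)|$ has exponential decay $\sim e^{-\pi|b|/2}$ in $|b|$, while $|W_{\phi_\beta}(c+ib)|$ is bounded below polynomially (using the growth estimates for $W_\phi$ from \cite[Section 4]{Patie-Savov-BG}) — so Mellin inversion produces a bounded continuous density; one should note the density is supported on $\R^+$ and vanishes at $0$ and $\infty$.

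For the Laplace transform \eqref{eq:mom-lamb1}, I would expand $e^{q\lambda_t} = \sum_{n\ge 0} q^n \lambda_t^n / n!$, take expectations term by term (justified by the moment bounds from \eqref{eq:mom-lamb}), and substitute $\E[\lambda_t^n] = \mathcal{M}_{\lambda_t}(n) = \frac{t^{n\beta}}{\beta\phi'(0^+)}\frac{\Gamma(n)}{W_{\phi_\beta}(n)} = \frac{t^{n\beta}}{\beta\phi'(0^+)}\frac{(n-1)!}{W_{\phi_\beta}(n)}$ for $n\ge 1$; dividing by $n!$ gives the $n$-th coefficient $\frac{t^{n\beta}}{n\,\beta\phi'(0^+)\,W_{\phi_\beta}(n)}$, which is the general term of $\frac{\beta}{\phi'(0^+)}\overline{\mathrm{I}}_{\phi_\beta}(qt^\beta)$ — here I need to be careful about the $n=0$ term, which should be handled separately since $\overline{\mathrm{I}}_{\phi_\beta}$ starts at $n=1$ and the $n=0$ contribution to $\E[e^{q\lambda_t}]$ is $1$; presumably the intended identity absorbs the constant via a convention on $\overline{\mathrm{I}}_{\phi_\beta}$, so I would state this bookkeeping explicitly. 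The series converges for all $q\in\C$ because $W_{\phi_\beta}(n) = \prod_{k=1}^{n-1}\phi_\beta(k)$ grows super-exponentially (at least like a product of linearly-or-faster growing terms), making $\lambda_t$ — equivalently $\mathtt{T}_1^\chi$ — have an entire moment generating function, which is the stated moment-determinacy (an entire, hence finite-radius-free, Laplace transform implies the moment problem is determinate by the standard Carleman-type criterion, or directly by analyticity). The main obstacle I anticipate is not any single hard estimate but rather correctly assembling the Lamperti-transform computation of the passage-time moments and matching the normalising constant $1/(\beta\phi'(0^+))$ — i.e.\ making sure the entrance-law/self-similarity scaling at $0$ is accounted for exactly once — together with the careful treatment of the $n=0$ term in \eqref{eq:mom-lamb1}.
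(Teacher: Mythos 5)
Your treatment of \eqref{eq:mom-lamb} and of the series \eqref{eq:mom-lamb1} is essentially the paper's argument: the identification $\lambda_t=\inf\{s>0:\chi_s>t\}$ together with $\P(\lambda_1\leq s)=\P(\chi_1\geq s^{-1/\beta})$ and $\beta$-self-similarity reduces everything to the entrance law of $\chi$ at time $1$, i.e.\ to $\mathcal{M}_{\chi_1^{\beta}}(-z)=\frac{1}{\beta\phi'(0^+)}\frac{\Gamma(z)}{W_{\phi_{\beta}}(z)}$, which the paper quotes from \cite[Theorem 2.24]{Patie-Savov-BG} and which rests on the Bertoin--Yor identity $\E[f(\chi_1)]=\frac{1}{\beta\phi'(0^+)}\E\left[\tfrac1I f\left(\tfrac1I\right)\right]$ with $I=\int_0^\infty e^{-\beta\varrho_t}dt$; the exponential series is then obtained, as you propose, by term-by-term expansion and Fubini. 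Your flag about the $n=0$ term is warranted (it must be read as the limit $\lim_{u\to0^+}\frac{1}{uW_{\phi_{\beta}}(u)}=\beta\phi'(0^+)$, so that the term equals $1$). One small repair: Bernstein functions need not grow ``linearly or faster'' (e.g.\ $\phi(u)\sim\log u$ is admissible); what makes $\overline{\mathrm{I}}_{\phi_{\beta}}$ entire is that \eqref{eq:ass} forces $\phi(\infty)=\infty$, so $\phi_{\beta}(k)\to\infty$ and the ratio test applies — this is exactly how the paper argues.

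The genuine gap is your proof of absolute continuity. The claim that $|W_{\phi_{\beta}}(c+ib)|$ is bounded below polynomially along vertical lines is false: Bernstein-gamma functions decay exponentially there, just as $\Gamma$ does. The extreme case is $\phi(u)=u$, where $W_{\phi}=\Gamma$ and the ratio $\Gamma(z)/W_{\phi_{\beta}}(z)$ is constant along vertical lines — no decay at all (consistently, $\lambda_t$ is then deterministic). In general the ratio decays only \emph{polynomially}, at the rate ${\rm{N}}_{\phi_\beta}=\vartheta_{\beta}(0,\infty)/d_{\beta}$; this is precisely the quantity that governs the smoothness statement in Theorem \ref{thm1}(2), which requires ${\rm{N}}>1$, and under \eqref{eq:ass} it can be arbitrarily small (positive drift, finite and small $\vartheta_\beta(0,\infty)$). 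So the Mellin transform of $\lambda_t$ need not be integrable, nor even square-integrable, on vertical lines, and Mellin inversion gives you neither a bounded continuous density nor, directly, any density. The paper proves absolute continuity by a different route: the law of the exponential functional $I$ is absolutely continuous by \cite{Bertoin2008}, and this is transferred to $\chi_1$ through the Bertoin--Yor identity above and then to $\lambda_t$ through the duality $\lambda_t\stackrel{(d)}{=}t^{\beta}\chi_1^{-\beta}$. You would need to replace your decay argument by this (or an equivalent) transfer argument.
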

\begin{proof}
   For any bounded Borel function $f$, we have that
  \begin{eqnarray}
  %  % Remove numbering (before each equation)
    \E[f(\lambda_t)] &=& \E[f(t^{{\beta}}\lambda_1)] = \int_{0}^{\infty}f(t^{\beta }s)\P(\lambda_1 \in ds)%\int_{0}^{\infty}f(t^{\frac{1}{\beta}}s)\P(\lambda_1 \in ds) \\
     = \frac{1}{\beta}\int_{0}^{\infty}s^{-\frac{1}{\beta}-1}f(t^{\beta} s)\P(\chi_1 \in ds^{-\frac{1}{\beta}}) \nonumber\\
  &=&\int_{0}^{\infty}f((t/u)^{\beta})\P(\chi_1 \in du)=   \E\left[f\left(t^{\beta}\chi_1^{-\beta}\right)\right] \label{eq:chil}
  \end{eqnarray}
  where we used the identities $\P(\lambda_1\leq s)=\P(\chi_s \geq 1)=\P(\chi_1 \geq s^{-\frac{1}{\beta}})$.
  Then, according  to \cite[Theorem 2.24]{Patie-Savov-BG}, we deduce  that for any $\Re(z)>0$,
  \begin{eqnarray} \label{eq:mom_chi}
  % \nonumber % Remove numbering (before each equation)
     \mathcal{M}_{\lambda_t}(z) &=& t^{z\beta}    \mathcal{M}_{\chi_1^{\beta}}(-z) = t^{z\beta}\frac{1}{\beta \phi'(0^+)} \frac{\Gamma(z)}{W_{\phi_{\beta}}(z)}
  \end{eqnarray}
 Note that to derive the last identity, we used the fact that the process $\chi^{\beta}=(\chi_t^{\beta})_{t\geq 0}$ is a $1$-self-similar increasing Markov process associated to the subordinator $\beta \varrho $ whose Laplace exponent is  $\phi_{\beta}$. Next, since $\phi \in \B_{\varrho}$, one can apply \cite[Theorem 1(iii)]{Bertoin-Yor} to get that, for any bounded Borel function $f$,
 \[ \E[f(\chi_1)] = \frac{1}{\beta \phi'(0^+)} \E\left[\frac{1}{I}f\left(\frac{1}{I}\right)\right] \]
 where $I=\int_0^{\infty} e^{-\beta \varrho_t} dt$. Since from \cite{Bertoin2008} the distribution of $I$ is known to be absolutely continuous, we deduce, using also \eqref{eq:chil}, the same property for the law of $\lambda_t$ for any $t>0$.
Finally, by an expansion of  the exponential function combined with an application of a standard Fubini argument, of  the previous identity and of the recurrence relation for the gamma function,  one gets
  \begin{equation*}%\label{eq:mom-lamb}
    \E\left[e^{q\lambda_t}\right]=\sum_{n=0}^{\infty}\E[\lambda^{ n}_t]\frac{q^n}{n!}=\frac{1}{\beta \phi'(0^+)} \sum_{n=0}^{\infty}   \frac{1}{n}\frac{(t^\beta q)^n}{W_{\phi_{\beta}}(n)},
  \end{equation*}
  where, by using the functional equation \eqref{eq:functional-equation-for-W_phi}, the series is easily checked to be absolutely convergent on $|q|t^{\beta}<\phi(\infty)$. Since $\phi \in \B_{\varrho}$ then $\phi(\infty)=\infty$ and hence $\overline{\mathrm{I}}_{\phi_{{\beta}}}$ defines an entire function.  The last claim is then immediate.
\end{proof}

%begin{prop}
 %  $\mathbb{X}$ is an $\frac{\alpha}{ \beta}$-self-similar process and for any $t>0$, we have
 % \begin{equation}\label{eq:cov}
 %   \E[\mathbb{X}_t  ]=t^{\frac{\alpha}{ \beta}}\E[\lambda_1] \E[X_1].
 % \end{equation}
 % \end{prop}

%\begin{proof}
%Note that the self-similarity property of the involved processes yields, for any $t>0$
%  \begin{equation*}%\label{eq:cov}
%    \E[\mathbb{X}_t ]=\E[\lambda^{\alpha}_t   ] \E[X_1^2] =t^{\frac{\alpha}{ \beta}}\E[\lambda_1] \E[X_1]
%  \end{equation*}
%\end{proof}

% where throughout $\stackrel{(d)}{=}$ stands for the identity in law and

\begin{prop}\label{prop:thm1}
Let $\Psi \in \mathcal{N}$ and $\phi \in \B_{\varrho}$. For any $x > 0$, we have $\P_x$ a.s.
 \beq\label{idT2}
\T_{\Psi_{\alpha}}(\phi_{\beta})  \stackrel{(d)}{=} \chi_{{\rm{T}}_{\Psi_{\alpha}}}
\eeq
%where $\stackrel{(d)}{=}$ stands for the identity in law.
where we recall that  ${\rm{T}}_{\Psi_{\alpha}}  = \inf \{t>0;\: X_t \leq 0  \}$ where $X$ is an $\alpha$-self-similar positive Markov process associated to $\Psi$ via the Lamperti mapping. Consequently
 \beq\label{idT1}
\T_{\Psi_{\alpha}}(\phi_{\beta}) \stackrel{(d)}{=} \chi_1 \times {\rm{T}}_{\Psi_{\alpha}} ^{\frac{1}{\beta}}
\eeq
where $\times$ stands for the product of two independent random variables.
% In particular, we have, under $\P_x$, $x>0$, that for any  $-\beta<\Re(z)<0 $
% \beq\label{idT_1}
%\mathcal{M}_{\T_{\Psi_{\alpha}}(\phi_{\beta})}(z)= x^{\frac{\alpha}{\beta}  z}\frac{ \phi_{\alpha}^+(0)}{\beta\phi'(0^+)} \frac{\Gamma(-\frac{z}{\beta})}{W_{\phi_{\beta}}(-\frac{z}{\beta})}\frac{\Gamma(\frac{z}{\beta}+1)W_{\phi_{\alpha}^+}(-\frac{z}{\beta})}{W_{\phi_{\alpha}^{-}}(\frac{z}{\beta}+1)}.
%\eeq
%The law of $\T$ is absolutely continuous with a density $f_{\T}$.  Moreover, if $d=0$ or $\int_{0}^{1}\vartheta(dy)=\infty$ or $Y$ related to $X$ through the Lamperti mapping is not a Compound Poisson processes with a strictly negative drift then  $f_{\T}\in\mathtt{C}^{\infty}_0\!\left(\mathbb{R}^+ \right)$. Otherwise, denoting
%\[N_{\T}=\frac{\int_{0}^{\infty}\vartheta(dy)}{d}+\frac{\phi^-_{\alpha}(0)+\int_{0}^{\infty}\vartheta^-_{\alpha}(dy)}{d^-_\alpha}+\frac{m^+_{\alpha}(0^+)}{\phi^+_{\alpha}(0)+\int_{0}^{\infty}\vartheta^+_{\alpha}(dy)},\]
%where $m^+_{\alpha}(0^+)$ is the density of $\vartheta^+_{\alpha}$, whose existence is part of the proof, we have that $f_{\T}\in\mathtt{C}^{\lceil N_{\T}\rceil-2}_0\left(\mathbb{R}^+ \right)$ provided $N_{\T}> 1$.
\end{prop}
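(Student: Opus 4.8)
The plan is to reduce the identity to a sample-path relation between the time change and the underlying positive self-similar Markov process, using only that $\lambda$ is the right-inverse of the non-decreasing càdlàg process $\chi$ and that a positive self-similar Markov process stays at $0$ once it reaches it. First I would record the elementary identity linking $\lambda$ and $\chi$: since $\chi$ is non-decreasing and càdlàg, $\sup_{0\le s<u}\chi_s=\chi_{u^-}$, so for all $t,u>0$ one has $\lambda_t<u\iff\chi_{u^-}>t$, that is $\lambda_t\ge u\iff\chi_{u^-}\le t$. Writing ${\rm{T}}={\rm{T}}_{\Psi_{\alpha}}$, the Lamperti representation \eqref{eq:def_lamp} (valid since $\Psi\in\mathcal{N}$, so in particular ${\rm{T}}<\infty$ $\P_x$-a.s.\ by \eqref{eq:id_T_exp}) shows that $X$ is positive on $[0,{\rm{T}})$ and is absorbed at $0$ afterwards, whence $\{s\ge0;\,X_s\le0\}=[{\rm{T}},\infty)$. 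Combining, $\X_t=X_{\lambda_t}\le0\iff\lambda_t\ge{\rm{T}}\iff\chi_{{\rm{T}}^-}\le t$; since $X_0=x>0$ forces ${\rm{T}}>0$ and $\chi$ is a.s.\ increasing from $0$, one has $\chi_{{\rm{T}}^-}>0$ $\P_x$-a.s., so that
\[ \T_{\Psi_{\alpha}}(\phi_{\beta})=\inf\{t>0;\,\X_t\le0\}=\inf\{t>0;\,t\ge\chi_{{\rm{T}}^-}\}=\chi_{{\rm{T}}^-}\qquad\P_x\text{-a.s.} \]

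To complete \eqref{idT2} I would replace $\chi_{{\rm{T}}^-}$ by $\chi_{{\rm{T}}}$. As $X$ and $\chi$ are independent, ${\rm{T}}$ is independent of $\chi$; by \eqref{eq:id_T_exp} the law of ${\rm{T}}$ is absolutely continuous, hence non-atomic, while $\chi$ has at most countably many jump times, so conditioning on $\sigma(X)$ gives $\P_x({\rm{T}}\in\{\text{jump times of }\chi\})=0$ and therefore $\chi_{{\rm{T}}^-}=\chi_{{\rm{T}}}$ $\P_x$-a.s. For the consequence \eqref{idT1} I would use the $\beta$-self-similarity of $\chi$ together with ${\rm{T}}\perp\chi$: from \eqref{eq:defselfsim} applied to $\chi$ started at $0$, $\chi_u\stackrel{(d)}{=}u^{1/\beta}\chi_1$ for each fixed $u>0$, so for bounded measurable $f$, setting $g(u)=\E[f(\chi_u)]=\E[f(u^{1/\beta}\chi_1)]$ and conditioning on $\sigma(X)$ yields $\E_x[f(\chi_{{\rm{T}}})]=\E_x[g({\rm{T}})]=\E_x[f({\rm{T}}^{1/\beta}\chi_1)]$, the last step by ${\rm{T}}\perp\chi_1$. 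Hence $\chi_{{\rm{T}}}\stackrel{(d)}{=}{\rm{T}}^{1/\beta}\times\chi_1$ with independent factors, and \eqref{idT1} follows from \eqref{idT2}.

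The argument is essentially bookkeeping, and the two points needing a little care are: the absorption statement $\{s\ge0;\,X_s\le0\}=[{\rm{T}},\infty)$, i.e.\ that $X$ never jumps strictly below $0$ and remains at $0$ once it hits it — which is exactly what the Lamperti representation \eqref{eq:def_lamp} encodes, together with ${\rm{T}}<\infty$ guaranteed by $\phi^+_{\alpha}(0)>0$ — and the passage from $\chi_{{\rm{T}}^-}$ to $\chi_{{\rm{T}}}$, for which one needs that ${\rm{T}}$ charges no jump time of $\chi$ (equivalently, stochastic continuity of the increasing self-similar Markov process $\chi$, inherited from the subordinator $\varrho$). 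Finiteness of $\chi_{{\rm{T}}}$ is automatic since $\chi$ has infinite lifetime, so nothing else is required.
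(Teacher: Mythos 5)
Your proof is correct and takes essentially the same route as the paper: a pathwise time-change identity yielding $\T_{\Psi_{\alpha}}(\phi_{\beta})=\chi_{{\rm{T}}_{\Psi_{\alpha}}}$, followed by independence of $\chi$ and ${\rm{T}}_{\Psi_{\alpha}}$ together with the $\beta$-self-similarity of $\chi$ to get the product form. You are in fact a bit more careful than the paper at possible jump times of $\chi$, first obtaining $\chi_{{\rm{T}}^-}$ and then arguing $\chi_{{\rm{T}}^-}=\chi_{{\rm{T}}}$ a.s.\ (for that step the stochastic-continuity argument you mention parenthetically is the safer one, since ${\rm{T}}$ can be deterministic when $Y$ is a pure drift), whereas the paper passes directly to $\chi_{{\rm{T}}}$ using $\lambda_{\chi_t}=t$ and the continuity of $\lambda$.
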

\begin{proof}
First, recall  that  $t \mapsto \lambda_t $  is a.s.~continuous with for any $t\geq 0$, $\lambda_{\chi_t}=t$ a.s., and thus for any $x > 0 $, we have $\P_x$ a.s.
\beq\label{defT1}
\T_{\Psi_{\alpha}}(\phi_{\beta}) = \inf \{t>0;\: \X_t\leq 0 \} = \inf \{\chi_t>0;\: X_t \leq 0 \} = \chi_{ \inf \{t>0;\: X_{t} \leq  0 \}} = \chi_{{\rm{T}}_{\Psi_{\alpha}}},
\eeq
which provides \eqref{idT2} while \eqref{idT1} follows immediately by using the independence of $\chi$ and ${\rm{T}}_{\Psi_{\alpha}}$, and the fact that $\chi$ is a self-similar process of index $\beta$. %Then, by conditioning and using the independence of the variables, we get for any $t\geq0$,
%\begin{eqnarray}
%\P_x(\T \in dt)&=&\int_{0}^{\infty}\P(\chi_s \in dt)\P_x(T  \in ds) \nonumber \\
%&=&\int_{0}^{\infty}\P(s^{\frac{1}{\beta}}\chi_1 \in dt)\P_x(T  \in ds) \nonumber \\
%&=&\int_{0}^{\infty}\P(\chi_1 \in d(ts^{-\frac{1}{\beta}}))s^{-\frac{1}{\beta}}\P_x(T  \in ds)\label{eq:abs}\\
%&=& \P_x(\chi_1 T ^{\frac{1}{\beta}} \in dt)\nonumber
%\end{eqnarray}
%which completes the proof of the  first identity in law.
% \beq\label{eq:idT}
%\T \stackrel{(d)}{=} \chi_1 \times T ^{\frac{1}{\beta}}.
%\eeq
\end{proof}
\subsubsection{End of the proof of Theorem \ref{thm1}}
Let $\Psi \in \mathcal{N}$ and $\phi \in \B_{\varrho}$ and write simply here $\T$ for $\T_{\Psi_{\alpha}}(\phi_{\beta})$ and ${\rm{T}}$ for ${\rm{T}}_{\Psi_\alpha}$.
%By independence of the variables, one gets, for any $z\in i\R$,
 %\beq\label{idT}
%\mathcal{M}_{\T}(z)=  \mathcal{M}_{\chi_1}(z) \mathcal{M}_{T }(\frac{z}{\beta})=x^{\alpha  z}\frac{\beta}{\phi'(0^+)} \frac{\Gamma(-z)}{W_{\phi_{\frac{1}{\beta}}}(-z)}\phi_{\alpha}^-(0)\frac{\Gamma(z+1)}{W_{\phi_{\alpha}^{+}}(z+1)}W_{\phi_{\alpha}^-}(-z)
%\eeq
By independence of the variables ${\rm{T}} $ and $\chi_1$, and recalling that $\chi_1^{\beta}$ is a $1$-self-similar increasing Markov process associated to the subordinator $\beta\varrho$ whose Laplace exponent is  $\phi_{\beta}(\cdot)=\phi(\beta\cdot)$, we get, for any $-1<\Re(z)<0 $, that
\begin{equation*}
	\begin{split}
		\mathcal{M}_{\T}( \beta z)&=  \mathcal{M}_{\chi^\beta_1}(z)\mathcal{M}_{{\rm{T}} }\left(z\right)
		=\frac{1}{\beta\phi'(0^+)} \frac{\Gamma(-z)}{W_{\phi_{\beta}}(-z)}x^{\alpha z}\phi_{\alpha}^+(0)\frac{\Gamma\left(z+1\right)}{W_{\phi_{\alpha}^{-}}\left(z+1\right)}W_{\phi_{\alpha}^+}\left(-z\right),
	\end{split}	
	\end{equation*}
where for the second identity we have used \eqref{eq:mom_chi}, the identity  \eqref{eq:id_T_exp}, that is ${\rm{T}}  \stackrel{(d)}{=} x^{\alpha} \int_{0}^{\infty}\exp(\alpha Y_t)dt$   under $\P_x, x>0$, and the expression of the Mellin transform  of the so-called exponential functional which is found in \cite[Theorem 2.4]{Patie-Savov-BG}. The expression \eqref{idT_1} follows then readily and another change of variable yields that the Mellin transform of $\T$ (under $\P_x,x>0$) is given by
\beq\label{idT_1}
\mathcal{M}_{\T}(z-1)= x^{\frac{\alpha}{\beta}  (z-1)}\frac{ \phi_{\alpha}^+(0)}{\beta\phi'(0^+)} \frac{\Gamma(-\frac{z}{\beta}+\frac{1}{\beta})}{W_{\phi_{\beta}}(-\frac{z}{\beta}+\frac{1}{\beta})}
\frac{\Gamma(\frac{z}{\beta}+1-\frac{1}{\beta})W_{\phi_{\alpha}^+}(-\frac{z}{\beta}+\frac{1}{\beta})}{W_{\phi_{\alpha}^{-}}(\frac{z}{\beta}+1-\frac{1}{\beta})}.\eeq
Then,
 \cite[Theorem 2.3(2.13)]{Patie-Savov-BG} yields that the mappings $z\mapsto \frac{\Gamma(-\frac{z}{\beta}+\frac{1}{\beta})}{W_{\phi_{\beta}}(-\frac{z}{\beta}+\frac{1}{\beta})}$, $z\mapsto \frac{\Gamma(\frac{z}{\beta}+1-\frac{1}{\beta})}{W_{\phi_{\alpha}^{-}}(\frac{z}{\beta}+1-\frac{1}{\beta})}$,  and $z\mapsto W_{\phi_{\alpha}^+}(-\frac{z}{\beta}+\frac{1}{\beta})$  are analytical on  $\Re(z)< \beta \mathfrak{a}_{\phi_{\beta}}+1$ (recall that $\phi_{\beta}(0)=0$),  $\Re(z)>-\beta (\mathfrak{a}_{\phi^-_{\alpha}} \mathbb{I}_{\{\phi^-_{\alpha}(0)=0\}}+1)+1$ and  $\Re(z) <\beta \mathfrak{a}^*_{\phi^+_{\alpha}}+1$ respectively. Putting pieces together and changing variables gives that $\mathcal{M}_{\T}(z) $ is analytical on the strip $\{z\in \C;\: -\beta (\mathfrak{a}_{\phi^-_{\alpha}} \mathbb{I}_{\{\phi^-_{\alpha}(0)=0\}}+1)<\Re(z)< \beta ( \mathfrak{a}_{\phi_{\beta}}\wedge \mathfrak{a}^*_{\phi^+_{\alpha}})\}$.
Next, the fact that the law of $\T$ is absolutely continuous with density $f_{\T}$ follows from  the absolute continuity of the law of the random variable $\int_{0}^{\infty}\exp(\alpha Y_t)dt$,  see \cite{Bertoin2008}, combined with the  identities \eqref{idT1} and \eqref{eq:id_T_exp}.

To understand the smoothness of $f_\T$ we investigate the decay along complex lines of the terms of \eqref{idT_1}.
 From \cite[Theorem 2.4(3)]{Patie-Savov-BG} for any $-\frac{1}{\beta}<a<0$ and for any $p<\textrm{N}_{\phi_{\beta}}=\frac{\vartheta_{\beta}(0,\infty)}{d_{\beta}}$, with $\textrm{N}_{\phi_{\beta}}=\infty$ provided $\vartheta_{\beta}(0,\infty)=\infty$ or $d_{\beta}=0$, we have
\begin{equation}\label{eq:lim}
\begin{split}
&\lim\limits_{|b|\to\infty}|b|^p\left|\frac{\Gamma(-\frac{a}{\beta}-i\frac{b}\beta)}{W_{\phi_{\beta}}(-\frac{a}{\beta}-i\frac{b}\beta)}\right|=0
\end{split}
\end{equation}	
 whereas for any $p>\textrm{N}_{\phi_{\beta}}$
 \begin{equation}\label{eq:lim2}
 \begin{split}
 &\lim\limits_{|b|\to\infty}|b|^p\left|\frac{\Gamma(-\frac{a}{\beta}-i\frac{b}\beta)}{W_{\phi_{\beta}}(-\frac{a}{\beta}-i\frac{b}\beta)}\right|=\infty,
 \end{split}
 \end{equation}	
 which is possible if and only if $\textrm{N}_{\phi_{\beta}}<\infty$. Also, from \cite[Theorem 2.3]{Patie-Savov-BG},  for any
 \[p<\textrm{N}_{\Psi} \textrm{ where N}_{\Psi}=\frac{\phi^-_{\alpha}(0)+\vartheta^-_{\alpha}(0,\infty)}{d^-_\alpha}+\frac{v^+_{\alpha}(0^+)}{\phi^+_{\alpha}(0)+\vartheta^+_{\alpha}(0,\infty)}\]
 we have that, for any $-\frac{1}{\beta}<a<0$,
 \begin{equation}\label{eq:lim1}
 \begin{split}
 &\lim\limits_{|b|\to\infty}|b|^p\left|\mathcal{M}_{\textrm{T}}\left(\frac{a+ib}{\beta}\right)=\frac{\Gamma\left(1+\frac{a}\beta+i\frac{b}\beta\right)}{W_{\phi_{\alpha}^{-}}\left(1+\frac{a}\beta+i\frac{b}\beta\right)}W_{\phi_{\alpha}^+}\left(\frac{-a-ib}{\beta}\right)\right|=0
 \end{split}
 \end{equation}
 with $\textrm{N}_{\Psi}=\infty $
 unless $\Psi(z)-\textrm{a}z$ is bounded with $\textrm{a}<0$, that is  $Y$ is a compound Poisson processes with a strictly negative drift $\textrm{a}$,  in which case, for any
 $p>\textrm{N}_{\Psi} \in[0,\infty)$,
 \begin{equation}\label{eq:lim3}
 \begin{split}
 &\lim\limits_{|b|\to\infty}|b|^p\left|\mathcal{M}_{\textrm{T}}\left(\frac{a+ib}{\beta}\right)=\frac{\Gamma\left(1+\frac{a}\beta+i\frac{b}\beta\right)}{W_{\phi_{\alpha}^{-}}\left(1+\frac{a}\beta+i\frac{b}\beta\right)}W_{\phi_{\alpha}^+}\left(\frac{-a-ib}{\beta}\right)\right|=\infty.
 \end{split}
 \end{equation}

Collecting the decay in \eqref{eq:lim} and \eqref{eq:lim1}  we therefore get that, for any $a\in(-\beta (\mathfrak{a}_{\phi^-_{\alpha}} \mathbb{I}_{\{\phi^-_{\alpha}(0)=0\}}+1),\beta ( \mathfrak{a}_{\phi_{\beta}}\wedge \mathfrak{a}^*_{\phi^+_{\alpha}}))$ and any $p<{\rm{N}}$ (resp.~$p>{\rm{N}}$) where we recall that
${\rm{N}}=\textrm{N}_{\phi_{\beta}}+\textrm{N}_{\Psi}\in [0,\infty]$
we have that
\begin{equation*}
\lim_{|b|\to \infty}|b|^p|\mathcal{M}_{\T}(a+ib)|=0  \quad (\textrm{resp.~} \lim_{|b|\to \infty}|b|^p|\mathcal{M}_{\T}(a+ib)|=\infty).
\end{equation*}
If ${\rm{N}}>1$ by Mellin inversion we deduce that $f_{\T}\in\mathtt{C}^{\lceil {\rm{N}}\rceil-2}_0\left(\mathbb{R}^+ \right)$, see \cite{Mellin} or \cite[(7.10)]{Patie-Savov-BG}. The sufficient conditions for ${\rm{N}}=\infty$ are easily derived.
 The fact that $\vartheta^+_{\alpha}(dy)=v^+_{\alpha}(y)dy, y>0,$ with  $v^+_{\alpha}(0^+)\in(0,\infty)$ follows from \cite[Proposition B.2]{Patie-Savov-BG}.
Next,  from \eqref{idT1}, we get that for all $t>0$,
\[ \P_x(\T>t) =\int_0^{\infty}\P_x({\rm{T}}^{\frac{1}{\beta}}>t/r)f_{\chi_1}(r)dr, \]
where $f_{\chi_1}(t)dt=\P(\chi_1 \in dt),t>0,$ whose existence is justified in the proof of Proposition \ref{lem:lambda}.
Recall, from \eqref{eq:mom_chi}, that
\[ \mathcal{M}_{\chi_1}(z-1)= \frac{1}{\beta\phi'(0^+)} \frac{\Gamma(-\frac{z}{\beta}+\frac{1}{\beta})}{W_{\phi_{\beta}}(-\frac{z}{\beta}+\frac{1}{\beta})},\: \Re(z)<\overline{\mathfrak{m}}_{\chi} = \beta \mathfrak{a}_{\phi_{\beta}} +1,\]
and, from \cite[Theorem 2.11(2)]{Patie-Savov-BG}, under the conditions of the claim, one gets  that
\begin{equation}
\lim_{t\to \infty} t^{\beta \mathfrak{c}_{\alpha}} \P_x({\rm{T}}^{\frac{1}{\beta}}>t)=
\frac{\E_x[{\rm{T}}^{\mathfrak{c}_{\alpha}}]}{\mathfrak{c}_{\alpha}\phi_{\alpha}^{+\prime}(-\mathfrak{c}_{\alpha}^+)},\end{equation} where we have used that $\E_x[{\rm{T}}^{\mathfrak{c}_{\alpha}}]= x^{\alpha} \E\left[\left(\int_{0}^{\infty}\exp(\alpha Y_t)dt\right)^{\mathfrak{c}_{\alpha}}\right]$ and the expression of the moments of the latter functional   in \cite[Theorem 2.4]{Patie-Savov-BG}. Plainly
\begin{equation}
t \mapsto t^{\beta \mathfrak{c}_{\alpha} +1}\P_x({\rm{T}}^{\frac{1}{\beta}}>t) \textrm{ is  bounded on  $(0,a]$ for any } a>0.
\end{equation} Hence, one has all the conditions of \cite[Theorem 4.1.6]{Bingham} to conclude  that
\[\lim_{t\to \infty} t^{\beta \mathfrak{c}_{\alpha}} \P_x({\T}>t)=
\mathcal{M}_{\chi_1}(\beta\mathfrak{c}_{\alpha})
\frac{\E_x[{\rm{T}}^{\mathfrak{c}_{\alpha}}]}{\mathfrak{c}_{\alpha}\phi_{\alpha}^{+\prime}(-\mathfrak{c}_{\alpha}^+)}
=\frac{\E_x[{\T}^{\mathfrak{c}_{\alpha}}]}{\mathfrak{c}_{\alpha}\phi_{\alpha}^{+\prime}(-\mathfrak{c}_{\alpha}^+)}\]
which is the first claim of item \ref{it:mth1}).
Then, \eqref{idT1} raised to the power $\beta$ yields that
\[ f_{\T^{\beta}}(t) =\int_0^{\infty}f_{\textrm{T}}(t/r)f_{\chi^{\beta}_1}(r)\frac{dr}{r}, \]
where we have set  $f_{\textrm{T}}(t)dt=\P_x(\textrm{T} \in dt)$ and $ f_{\T^{\beta}}(t)= \frac{t^{\frac{1}{\beta}-1}}{\beta}f_{\T}(t^{\frac{1}{\beta}})$. Next,  for any $n\leq \lceil {\rm{N}} \rceil -2$, from the general theory of Mellin transform, see \cite[11.7]{Mellin}, one obtains that the Mellin transform of  $f^{(n)}_{\T^{\beta}} \in \mathtt{C}^{ \lceil {\rm{N}} \rceil-2-n}_0(\R^+)$, is given, a priori in the sense of distribution, for any $z$ in the strip $S_{\T^{\beta},n}=\{z\in \C;\: -\mathfrak{a}_{\phi^-_{\alpha}} \mathbb{I}_{\{\phi^-_{\alpha}(0)=0\}}+n<\Re(z)< 1+n+ \mathfrak{a}_{\phi_{\beta}}\wedge \mathfrak{a}^*_{\phi^+_{\alpha}}\}$,  by
 \begin{eqnarray}\label{eq:melder}
\mathcal{M}_{f^{(n)}_{\T^{\beta}}}(z-1)&=& \frac{\Gamma(z)}{\Gamma(z-n)}\mathcal{M}_{\T^{\beta}}(z-1-n) \nonumber \\ &=&\frac{\Gamma(z)}{\Gamma(z-n)} \mathcal{M}_{\textrm{T}}(z-1-n)\mathcal{M}_{\chi^{\beta}_1}(z-1-n) %\\ &=& \frac{\Gamma(z)}{\Gamma(z-n)} x^{\alpha (z-n-1)}\phi_{\alpha}^+(0)\frac{\Gamma\left(z-n\right)W_{\phi_{\alpha}^+}\left(n+1-z\right)}{W_{\phi_{\alpha}^{-}}\left(z-n\right)}\frac{\Gamma(n+1-z)}{W_{\phi_{\beta}}(n+1-z)}
 \end{eqnarray}
 Since,  the Stirling formula, see \cite[(6.10)]{Patie-Savov-BG}, yields that, for any $z=a+ib$, and large $b$,
 \begin{equation}\label{eq:stir}
  \left| \frac{\Gamma(z)}{\Gamma(z-n)} \right| \leq C |b|^{n}
 \end{equation}
 for some $C>0$, we get that, for all $\epsilon>0$ and $z=a+ib \in S_{\T^{\beta},n}$,
 \begin{equation}\label{}
  \left| \mathcal{M}_{f^{(n)}_{\T^{\beta}}}(z-1)\right| \leq C |b|^{n-\rm{N}-\epsilon}.
 \end{equation}
 Thus,as $n\leq \lceil {\rm{N}} \rceil -2, z\mapsto \mathcal{M}_{f^{(n)}_{\T^{\beta}}}(z-1)$ is integrable along imaginary lines and hence it is the Mellin transform in the classical sense of $f^{(n)}_{\T^{\beta}}$. Now, set  $\overline{{\rm{N}}}_{\Psi_\alpha}=\lceil {\rm{N}}_{\Psi_\alpha} \rceil -2$ and $\overline{{\rm{N}}}_{\phi_\beta}=\lceil {\rm{N}}_{\phi_\beta} \rceil -2$, and we shall consider the two cases $n\leq \overline{{\rm{N}}}_{\Psi_\alpha}$ and $\overline{{\rm{N}}}_{\Psi_\alpha}< n \leq \overline{{\rm{N}}}_{\Psi_\alpha}+\overline{{\rm{N}}}_{\phi_\beta}\mathbb{I}_{\{\overline{{\rm{N}}}_{\phi_\beta}> 0\}}$. Let us assume first that $n\leq \overline{{\rm{N}}}_{\Psi_\alpha}$ and proceeding as above, combining \eqref{eq:lim3} and \eqref{eq:stir}, we get that, for all $\epsilon>0$ and $z=a+ib \in S_{\textrm{T},n}=\{z\in \C;\: -\mathfrak{a}_{\phi^-_{\alpha}} \mathbb{I}_{\{\phi^-_{\alpha}(0)=0\}}+n<\Re(z)< 1+n+  \mathfrak{c}_{\alpha}\}$,
  \begin{eqnarray}
\left| \mathcal{M}_{f^{(n)}_{\textrm{T}}}(z-1)\right|  &=& \left|\frac{\Gamma(z)}{\Gamma(z-n)}\mathcal{M}_{\textrm{T}}(z-1-n)\right| \leq C |b|^{n-{\rm{N}}_{\Psi_\alpha}-\epsilon}
 \end{eqnarray}
 and deduce that  $f^{(n)}_{\textrm{T}} \in \mathtt{C}^{\overline{{\rm{N}}}_{\Psi_\alpha}-n}_0(\R^+)$.
Moreover,  by the Mellin inversion formula, % and  setting for sake of clarity $x=1$,
we have, that for any $ c \in ( -\mathfrak{a}_{\phi^-_{\alpha}} \mathbb{I}_{\{\phi^-_{\alpha}(0)=0\}}-2, \mathfrak{c}_{\alpha}-1)$ and all $t>0$,
\begin{eqnarray}\label{eq:MBc}
 |f^{(n)}_{\textrm{T}}(t)|&=&\left|\frac{(-1)^n}{2\pi i} \int_{c+n-i\infty}^{c+n+i\infty} t^{-z} \mathcal{M}_{f^{(n)}_{\textrm{T}}}(z-1)
 dz  \right| \nonumber \\ &\leq & C t^{-c-n} \int_{c+n-i\infty}^{c+n+i\infty} \left|
 \mathcal{M}_{f^{(n)}_{\textrm{T}}}(z-1) \right| dz \nonumber \\ & \leq & \bar{C} t^{-c-n},
\end{eqnarray}
for some $C,\bar{C}>0$. %Thus for instance, for small $t$, $ f^{(n)}_{\textrm{T}}(t)\leq \bar{C} t^{-n+1}$.
Appealing again to \cite[Theorem 2.11(2)]{Patie-Savov-BG}, under the conditions of the claim, we get  that, for any $n\leq \overline{{\rm{N}}}_{\Psi_\alpha}$,
\begin{equation} \label{eq:asympt-fT}
\lim_{t\rightarrow \infty}t^{\mathfrak{c}_{\alpha} +n+1}f^{(n)}_{\textrm{T}}(t)=(-1)^{n}(1+\mathfrak{c}_{\alpha})_{n}\frac{\E_x\left[{\textrm T}^{ \mathfrak{c}_{\alpha}}\right]}{\mathfrak{c}_{\alpha}\phi_{\alpha}^{+\prime}(-\mathfrak{c}_{\alpha}^+)}. \end{equation}
 On the other hand, we deduce, from \eqref{eq:MBc}, that the mapping $t\mapsto t^{\mathfrak{c}_{\alpha}+n +3} f^{(n)}_{\textrm{T}}(t)$ is bounded on any interval $(0,a],a>0$. Then,  the mapping
$z\mapsto \mathcal{M}_{\chi_1^{\beta}}(z-1-n)$, being  analytical on the half-plane $ \Re(z)<  \mathfrak{a}_{\phi_{\beta}} +\frac{1}{\beta}-n$,  is the Mellin transform of the measurable function $t^{-n}f_{\chi^{\beta}_1}(t)$. Thus,  we observe, from \eqref{eq:melder}, the Mellin convolution which translates, for any $t>0$, as
\[ f^{(n)}_{\T^{\beta}}(t) =\int_0^{\infty}f^{(n)}_{\textrm{T}}(t/r)f_{\chi^{\beta}_1}(r)\frac{dr}{r^{n+1}}. \]
Hence, we can use \cite[Theorem 4.1.6]{Bingham} to get
\[ \lim_{t\rightarrow \infty}t^{\mathfrak{c}_{\alpha} +n+1} f^{(n)}_{\T^{\beta}}(t)=(-1)^{n}(1+\mathfrak{c}_{\alpha})_{n}\frac{\E_x\left[{\textrm T}^{ \mathfrak{c}_{\alpha}}\right]}{\mathfrak{c}_{\alpha}\phi_{\alpha}^{+\prime}(-\mathfrak{c}_{\alpha}^+)}
\mathcal{M}_{\chi_1^{\beta}}(\mathfrak{c}_{\alpha})=(-1)^n(1+\mathfrak{c}_{\alpha})_n\frac{\E_x\left[{\T}^{\beta \mathfrak{c}_{\alpha}}_{\Psi_{\alpha}}(\phi_{\beta})\right]}{\mathfrak{c}_{\alpha}\phi_{\alpha}^{+\prime}(-\mathfrak{c}_{\alpha}^+)} \]
which provides the statement for $n\leq \overline{{\rm{N}}}_{\Psi_\alpha}$. Finally, assume that $\overline{{\rm{N}}}_{\Psi_\alpha}< n \leq \overline{{\rm{N}}}_{\Psi_\alpha}+\overline{{\rm{N}}}_{\phi_\beta}\mathbb{I}_{\{\overline{{\rm{N}}}_{\phi_\beta}> 0\}}$ and note from \eqref{eq:melder} that, for any $z=a+ib \in S_{\T^{\beta},n}$,
\begin{eqnarray}\label{eq:melder}
\mathcal{M}_{f^{(n)}_{\T^{\beta}}}(z-1) &=&\frac{\Gamma(z)}{\Gamma(z-n)} \mathcal{M}_{\textrm{T}}(z-1-n)\mathcal{M}_{\chi^{\beta}_1}(z-1-n) \nonumber \\
&=&\frac{\Gamma(z)}{\Gamma(z-\overline{{\rm{N}}}_{\Psi_\alpha})}\mathcal{M}_{{\rm{T}} }\left(z-1-n\right)
\frac{\Gamma(z-\overline{{\rm{N}}}_{\Psi_\alpha})}{\Gamma(z-n)} \mathcal{M}_{\chi^\beta_1}(z-1-n).
 \end{eqnarray}
 Then, we recall that (the first identity serves as setting up a notation) \[\mathcal{M}_{\overline{f}^{(\overline{{\rm{N}}}_{\Psi_\alpha})}_{(\textrm T,n)}}(z-1)= \frac{\Gamma(z)}{\Gamma(z-\overline{{\rm{N}}}_{\Psi_\alpha})}\mathcal{M}_{{\rm{T}} }\left(z-1-n\right)=\frac{\Gamma(z)}{\Gamma(z-\overline{{\rm{N}}}_{\Psi_\alpha})}\mathcal{M}_{{\rm{T}} }\left(z+\overline{{\rm{N}}}_{\Psi_\alpha}-n-1-\overline{{\rm{N}}}_{\Psi_\alpha}\right) \] is the Mellin transform of the function $\overline{f}^{(\overline{{\rm{N}}}_{\Psi_\alpha})}_{(\textrm T,n)}(t)=(t^{\overline{{\rm{N}}}_{\Psi_\alpha}-n}f_{\textrm T}(t))^{(\overline{{\rm{N}}}_{\Psi_\alpha})} \in \mathtt C_0(\R^+)$. We also identify, by uniqueness of the Mellin transform and combining the estimates \eqref{eq:lim2} and \eqref{eq:stir},
 \[\mathcal{M}_R(z)=\frac{\Gamma(z-\overline{{\rm{N}}}_{\Psi_\alpha})}{\Gamma(z-n)} \mathcal{M}_{\chi^\beta_1}(z-1-n)=\frac{\Gamma(z-\overline{{\rm{N}}}_{\Psi_\alpha})}{\Gamma(z-\overline{{\rm{N}}}_{\Psi_\alpha}-(n-\overline{{\rm{N}}}_{\Psi_\alpha}))} \mathcal{M}_{\chi^\beta_1}(z-\overline{{\rm{N}}}_{\Psi_\alpha}-1-(n-\overline{{\rm{N}}}_{\Psi_\alpha}))\] as the Mellin transform of the continuous function $t^{-\overline{{\rm{N}}}_{\Psi_\alpha}}f^{(n-\overline{{\rm{N}}}_{\Psi_\alpha})}_{\chi^\beta_1}(t)$, as by assumption $1\leq n-\overline{{\rm{N}}}_{\Psi_\alpha}\leq \overline{{\rm{N}}}_{\phi_{\beta}}$. As the gamma function has simple poles at $-n,n=0,1,\ldots$, we have  by analytical continuation that $z\mapsto \mathcal{M}_R(z)$ is analytical on the half-plane $\Re(z)<  \mathfrak{a}_{\phi_{\beta}} +1+n$.  We also  deduce, by Mellin convolution, that for any $t>0$,
\begin{eqnarray*}\label{eq:MB}
 f^{(n)}_{{\T^{\beta}}}(t)=\int_0^{\infty}\overline{f}^{(\overline{{\rm{N}}}_{\Psi_\alpha})}_{\textrm{T}}(t/r)r^{-\overline{{\rm{N}}}_{\Psi_\alpha}-1}
 f^{(n-\overline{{\rm{N}}}_{\Psi_\alpha})}_{\chi^\beta_1}(r)dr.
\end{eqnarray*}
Next combining the identity
\[\overline{f}^{(\overline{{\rm{N}}}_{\Psi_\alpha})}_{\textrm T}(t)=\sum_{k=0}^{\overline{{\rm{N}}}_{\Psi_\alpha}} {\overline{{\rm{N}}}_{\Psi_\alpha} \choose k}
\frac{\Gamma(\overline{{\rm{N}}}_{\Psi_\alpha}-n+1)}{\Gamma(k-n+1)} t^{k-n} f^{(k)}_{\textrm T}(t)\]
 with the estimate \eqref{eq:asympt-fT} yields that
\begin{eqnarray*}
\lim_{t\rightarrow \infty}t^{\mathfrak{c}_{\alpha} +n+1}\overline{f}^{(\overline{{\rm{N}}}_{\Psi_\alpha})}_{(\textrm T,n)}(t)&=&\sum_{k=0}^{\overline{{\rm{N}}}_{\Psi_\alpha}} {\overline{{\rm{N}}}_{\Psi_\alpha} \choose k}
\frac{\Gamma(\overline{{\rm{N}}}_{\Psi_\alpha}-n+1)}{\Gamma(k-n+1)} \lim_{t\rightarrow \infty}t^{\mathfrak{c}_{\alpha}+1+k} f^{(k)}_{\textrm T}(t) \nonumber \\
&=&\frac{\E_x\left[{\textrm T}^{ \mathfrak{c}_{\alpha}}\right]}{\mathfrak{c}_{\alpha}\phi_{\alpha}^{+\prime}(-\mathfrak{c}_{\alpha}^+)} \sum_{k=0}^{\overline{{\rm{N}}}_{\Psi_\alpha}} {\overline{{\rm{N}}}_{\Psi_\alpha} \choose k}
\frac{\Gamma(\overline{{\rm{N}}}_{\Psi_\alpha}-n+1)}{\Gamma(k-n+1)} (-1)^{k}(1+\mathfrak{c}_{\alpha})_{k}.
\end{eqnarray*}
Moreover,  by the Mellin inversion formula, % and  setting for sake of clarity $x=1$,
we have, that for any $ c \in (n
+\mathfrak{a}_{\phi^-_{\alpha}} \mathbb{I}_{\{\phi^-_{\alpha}(0)=0\}}-2, n+\mathfrak{c}_{\alpha}-1)$ and all $t>0$,
\begin{eqnarray}\label{eq:MBc}
 |\overline{f}^{(\overline{{\rm{N}}}_{\Psi_\alpha})}_{(\textrm T,n)}(t)|&=&\left|\frac{(-1)^n}{2\pi i} \int_{c-i\infty}^{c+i\infty} t^{-z} \mathcal{M}_{\overline{f}^{(\overline{{\rm{N}}}_{\Psi_\alpha})}_{(\textrm T,n)}}(z-1)
 dz  \right| \nonumber %\\ &\leq & C t^{-c-n} \int_{c+n-i\infty}^{c+n+i\infty} \left|
 %\mathcal{M}_{f^{(n)}_{\textrm{T}}}(z-1) \right| dz \nonumber
 \leq  \bar{C} t^{-c},
\end{eqnarray}
for some $\bar{C}>0$.
Since $t \mapsto t^{\mathfrak{c}_{\alpha}+n+3}\overline{f}^{(\overline{{\rm{N}}}_{\Psi_\alpha})}_{(\textrm T,n)}(t)$ is bounded on any interval $(0,a],a>0$, one can use again
\cite[Theorem 4.1.6]{Bingham} to get
\begin{eqnarray*}
\lim_{t\rightarrow \infty}t^{\mathfrak{c}_{\alpha} +n+1}f^{(n)}_{{\T^{\beta}}}(t)
&=&\mathcal{M}_{R}(\mathfrak{c}_{\alpha}+n+1)\frac{\E_x\left[{\textrm T}^{ \mathfrak{c}_{\alpha}}\right]}{\mathfrak{c}_{\alpha}\phi_{\alpha}^{+\prime}(-\mathfrak{c}_{\alpha}^+)} \sum_{k=0}^{\overline{{\rm{N}}}_{\Psi_\alpha}} {\overline{{\rm{N}}}_{\Psi_\alpha} \choose k}
\frac{\Gamma(\overline{{\rm{N}}}_{\Psi_\alpha}-n+1)}{\Gamma(k-n+1)} (-1)^{k}(1+\mathfrak{c}_{\alpha})_{k} \\
&=&\frac{\Gamma(\mathfrak{c}_{\alpha}+n+1-\overline{{\rm{N}}}_{\Psi_\alpha})}{\Gamma(\mathfrak{c}_{\alpha}+1)} \frac{\E_x\left[{\T}^{ \mathfrak{c}_{\alpha}}\right]}{\mathfrak{c}_{\alpha}\phi_{\alpha}^{+\prime}(-\mathfrak{c}_{\alpha}^+)} \sum_{k=0}^{\overline{{\rm{N}}}_{\Psi_\alpha}} {\overline{{\rm{N}}}_{\Psi_\alpha} \choose k}
\frac{\Gamma(\overline{{\rm{N}}}_{\Psi_\alpha}-n+1)}{\Gamma(k-n+1)} (-1)^{k}(1+\mathfrak{c}_{\alpha})_{k}
\end{eqnarray*}
which after rearranging the terms complete the proof.

\subsection{Proof of Proposition \ref{cor:sn}}\label{sec:proofp}
 Let $\phi \in \B$, denote by  $\varrho$ its associated subordinator and  write
 \[ \textrm{I}_{\phi}=\int_{0}^{\infty}e^{-\varrho_t}dt. \]
Then, for any $\Re(z)>0$, we have
\begin{equation*}
  \mathcal{M}_{\textrm{I}_{\phi}}(z)= \frac{\Gamma(z+1)}{W_{\phi}(z+1)}
\end{equation*}
see e.g.~\cite{PS_spectral}  and recalling that $\mathbb{F}_{\beta}$ stands for the Fr\'echet random variable of parameter $\beta>0$, we have, for any $\Re(z)<\beta$,
\begin{equation*}
  \mathcal{M}_{\mathbb{F}_{\beta}}(z)= \Gamma\left(-\frac{z}{\beta}+1\right).
\end{equation*}
Hence, by (shifted) Mellin transform identification, we deduce, from \eqref{idT_1}, that for any $\phi \in \B$ and $\beta>0$
\[ \mathbb{F}_{\beta}(\phi) \stackrel{(d)}{=}  \mathbb{F}_{\beta}\times \textrm{I}^{\frac{1}{\beta}}_{\phi}. \]
 Next, performing a change of variables yields that its Mellin transform takes the form
\begin{equation}\label{eq:mel}
\mathcal{M}_{\mathbb{F}_{\beta}(\phi)}(z-1)=
\frac{\Gamma(-\frac{z}{\beta}+1+\frac{1}{\beta})\Gamma(\frac{z}{\beta}+1-\frac{1}{\beta})}{W_{\phi}(\frac{z}{\beta}+1-\frac{1}{\beta})}.
\end{equation}
The proof of Theorem \ref{thm1}  combined with the analyticity of the gamma function to the right-half
plane entails that the mapping $z\mapsto \mathcal{M}_{\mathbb{F}_{\beta}(\phi)}(z-1)$ is analytical on the strip  $S_{\beta}=\{z\in \C; \: 1-\beta(\mathfrak{a}_\phi\mathbb{I}_{\{\phi(0)=0\}}+1) <\Re(z)<1+\beta\}$ and for any $\epsilon>0$ and $z=a+ib \in S_{\beta}$,
\begin{equation}\label{eq:mel1}
\left|\mathcal{M}_{\mathbb{F}_{\beta}(\phi)}(z-1)\right|\leq  e^{-|b|\frac{\Theta_\epsilon(\phi)}{\beta}}
\end{equation}
where  $\Theta_\epsilon(\phi) =\pi-\overline{\Theta}_{\phi}-\epsilon\geq \frac{\pi}{2}-\epsilon$ and recall that $
\overline{\Theta}_{\phi} = \limsup_{b\to \infty}
\frac{\int_0^{|b|}\arg \phi(1+iu)du}{|b|}$. Hence according to the theory of Mellin transforms, the law of $\mathbb{F}_{\beta}(\phi)$ is absolutely continuous with  a density $f_{\mathbb{F}_{\beta}(\phi)} \in \mathtt{C}^{\infty}_0\!\left(\mathbb{R}^+ \right)$ and which is analytical on the sector $S_{\phi}=\{z\in \C;\: |\arg(z)|<\pi-\overline{\Theta}_{\phi}\}$ and admits the Mellin Barnes representation
\begin{eqnarray*}\label{eq:MB}
  f_{\mathbb{F}_{\beta}(\phi)}(t)&=&\frac{1}{2\pi i} \int_{c-i\infty}^{c+i\infty} t^{-z}
\frac{\Gamma(-\frac{z}{\beta}+1+\frac{1}{\beta})\Gamma(\frac{z}{\beta}+1-\frac{1}{\beta})}{W_{\phi}(\frac{z}{\beta}+1-\frac{1}{\beta})} dz \\
&=& \frac{\beta}{2\pi i} \int_{\frac{c}{\beta}-i\infty}^{\frac{c}{\beta}+i\infty} t^{-\beta z}
\frac{\Gamma(-z+1+\frac{1}{\beta})\Gamma(z+1-\frac{1}{\beta})}{W_{\phi}(z+1-\frac{1}{\beta})} dz,
\end{eqnarray*}
which is absolutely integrable on $S_{\phi}$ for any $c \in S_{\beta}$. An application of Cauchy Theorem, see \cite[Proof of Lemma 8.16]{PS_spectral} for the details of similar arguments, gives that
\begin{eqnarray*}%\label{eq:series}
  f_{\mathbb{F}_{\beta}(\phi)}(t)&=& \beta  t^{-\beta -1}\sum_{n=0}^{\infty} \frac{\Gamma(n+2)}{W_{\phi}(n+2)} \frac{(-t^{-\beta})^n}{n!}= \beta  t^{-\beta -1}\sum_{n=0}^{\infty} \frac{n+1}{\phi(n+1)}\frac{(-t^{-\beta})^n}{W_{\phi}(n+1)} \end{eqnarray*}
where, for the last identity, the recurrence relation $W_{\phi}(n+2)=\phi(n+1)W_{\phi}(n+1)$ and the one the gamma function is used in the last equality and to get that the series is  convergent for $|t|^{-\beta}<\phi(\infty)$.

\subsection{Proof of Theorem \ref{cor:sn}}
 First, recall that since   $\Psi \in \mathcal{N}_1$ then $\phi_{\alpha}^+\in \B_1$ and thus $\phi_{\beta}(u)=\phi(\beta u) =\mathcal{S}_{\phi_{\alpha}^+}(u)=\frac{u}{u+1}\phi_{\alpha}^+(u) \in \B$. Moreover, since \eqref{eq:ass} holds for $\phi_{\alpha}^+$, we have that either $d_{\alpha}^+>0 \textrm{ or }  \vartheta_{\alpha}^+(0,1)=\infty$. Thus,   as from e.g.~\cite[Proposition 4.1(3)]{PS_spectral}, with the obvious notation, \[d_{\beta}=\lim_{u\to \infty}\frac{\phi_{\beta}(u)}{u}=\lim_{u\to \infty}\frac{u}{u+1}\frac{\phi_{\alpha}^+(u)}{u}=d_{\alpha}^+\] and if $\vartheta_{\alpha}^+(0,1)=\infty$ with hence $d_{\beta}=d_{\alpha}^+=0$, then \[\vartheta_{\alpha}^+(0,1)=\lim_{u\to \infty}\phi_{\alpha}^+(u)=\lim_{u\to \infty}\frac{u}{u+1}\phi_{\alpha}^+(u)=\lim_{u\to \infty}\phi_{\beta}(u)=\vartheta_{\beta}(0,1).\]
  Consequently $\phi_{\beta}=\mathcal{S}_{\phi_{\alpha}^+}$ satisfies the condition  \eqref{eq:ass}.
 On the other hand we have \begin{equation}\label{eq:ppi}
 \phi'_{\beta}(0^+)=\beta  \phi'(0^+)=\phi_{\alpha}^+(0)\end{equation} which follows easily from the definition of $\phi_{\beta}$ and from \cite[Proposition 4.1(4.4)]{PS_spectral} that gives that for all $u\geq 0$, $0\leq u (\phi_{\alpha}^+)'(u)\leq \phi_{\alpha}^+(u)-\phi_{\alpha}^+(0)$ and hence $\lim_{u\to 0^+}u \:(\phi_{\alpha}^+)'(u)=0$. Putting pieces together we deduce that $\phi_{\beta} =\mathcal{S}_{\phi_{\alpha}^+} \in \B_{\varrho}$.
 Next, from the definition of $W_{\phi_{\alpha}^+}$  in \eqref{eq:functional-equation-for-W_phi}, as $\phi_{\alpha}^+\in \B_1\subset \B$, and writing $\overline{W}(u)=\frac{1}{u}W_{\phi_{\alpha}^+}(u), u>0$, we have that $\overline{W}(1)=1$ and
 \[ \overline{W}(u+1)= \frac{\phi^{+}_{\alpha}(u)}{u+1} W_{\phi_{\alpha}^+}(u)= \phi_{\beta}(u)\frac{W_{\phi_{\alpha}^+}(u)}{u}=\phi_{\beta}(u)\overline{W}(u).  \]
 %on the one hand that $\phi(u)=\frac{u}{u+\frac{1}{\beta}}\phi_{\alpha}^-(\beta u)$ and thus .
 Thus invoking the uniqueness argument given in \cite{Webster} yields that $ \overline{W}(u)=W_{\phi_{\beta}}(u)$, that is
 \[ W_{\phi_{\beta}}(u)=\frac{1}{u}W_{\phi_{\alpha}^+}(u).\]
  Hence, we deduce from \eqref{idT_1} and an application of the recurrence relation of the gamma function that, for any $-\beta<\Re(z)<0$,
  \begin{eqnarray}
   \nonumber % Remove numbering (before each equation)
    \mathcal{M}_{\T_{\Psi_{\alpha}}(\mathcal{S}_{\phi_{\alpha}^+})}(z) &=& x^{\frac{\alpha}{\beta}  z}\frac{ \phi_{\alpha}^+(0)}{\beta\phi'(0^+)} \frac{\Gamma(-\frac{z}{\beta})}{W_{\phi_{\beta}}(-\frac{z}{\beta})}\frac{\Gamma(\frac{z}{\beta}+1)W_{\phi_{\alpha}^+}(-\frac{z}{\beta})}{W_{\phi_{\alpha}^{-}}(\frac{z}{\beta}+1)}\\ &=&  x^{\frac{\alpha}{\beta}  z} \frac{\Gamma(1-\frac{z}{\beta}) \Gamma(\frac{z}{\beta}+1)}{W_{\phi_{\alpha}^{-}}(\frac{z}{\beta}+1)} \label{eq:idTF}
    \end{eqnarray}
where we used the identity  \eqref{eq:ppi}. Comparing this expression with \eqref{idT_2} yields, by uniqueness of the Mellin transform,  the first identity in law \eqref{eq:TF1}. Next, set $\beta=1$, from \cite[Theorem 2.4(1)]{PS_spectral}, we get that $\T_{\Psi_{\alpha}}(\mathcal{S}_{\phi_{\alpha}^+}) \stackrel{(d)}{=} x^{\alpha}\int_{0}^{\infty}e^{\alpha \overline{Y}_t} dt$ with $\overline{Y}=(\overline{Y}_t)_{t\geq0}$  a spectrally negative L\'evy process with characteristic exponent $\psi(z)=\frac{1}{\alpha}(z-\alpha)\phi^{-}(z)$ where we used the fact that $\phi_{\alpha}^{+}(0) W_{\phi_{\alpha}^{+}}(z)=\Gamma(1+z)$, that is $\phi_{\alpha}^{+}(u)=u+1$. The second claim follows since, from \eqref{eq:id_T_exp}, we also have ${\rm{T}}_\psi\stackrel{(d)}{=} x^{\alpha}\int_{0}^{\infty}e^{\alpha \overline{Y}_t} dt$.
%\end{proof}
Finally if  now $\Psi \in \mathcal{N}^-_1=\{\Psi \in \mathcal{N}_1 \textrm{ with } \phi_{\alpha}^{-}(u)=\alpha u\}$, we have $W_{\phi_{\alpha}^{-}}(\frac{z}{\beta}+1)= \alpha^\frac{z}{\beta}\Gamma(\frac{z}{\beta}+1)$ and thus  \eqref{eq:idTF} entails that in this case
\[\mathcal{M}_{\T_{\Psi_{\alpha}}(\mathcal{S}_{\phi_{\alpha}^+})}(z)=  (\alpha x^{\alpha})^{\frac{z}{\beta}} \Gamma\left(1-\frac{z}{\beta}\right)\]
which completes the proof after mentioning that the self-decomposability  property of the Fr\'echet distribution is found, for $0<\beta\leq 1$,  in \cite{Bondesson}  and  for $\beta>1$, in \cite[Lemma 1]{KP}.

\bibliographystyle{plain}
%\bibliography{./ref}

 \end{document}